\renewcommand{\@secnumfont}{\bfseries}
\renewcommand{\section}{\@startsection{section}{1}%
  \z@{.7\linespacing\@plus\linespacing}{.5\linespacing}%
  {\normalfont\bfseries\centering}}
\theoremstyle{plain}
\newtheorem{Theorem}{Theorem}[section]
\newtheorem{Lemma}[Theorem]{Lemma}
\newtheorem{Corollary}[Theorem]{Corollary}
\newtheorem{Remark}[Theorem]{Remark}
\theoremstyle{definition}
\newtheorem{Example}[Theorem]{Example}
\numberwithin{equation}{section}
\begin{document}

%\title{Brill-Noether loci in codimension two}
\title[Linear series on a curve bridged  by a chain of elliptic curves]{Linear series on a curve of compact type \\
bridged  by a chain of elliptic curves}

\author[Y. Choi]{Youngook Choi}
\address{Department of Mathematics Education, Yeungnam University, 280 Daehak-Ro, Gyeongsan, \hfill \newline\texttt{}
 \indent  Gyeongbuk, 38541, Republic of Korea}
\email{ychoi824@yu.ac.kr}

\author[S. Kim]{Seonja Kim}
\address{Department of Electronic Engineering,
Chungwoon University,  Sukgol-ro, Michuhol-gu,\hfill \newline\texttt{}
 \indent  Incheon,  22100,  Republic of Korea}
\email{sjkim@chungwoon.ac.kr}

\thanks{The first author was supported by Basic Science Research Program through the National Research Foundation of Korea(NRF) 
funded by the Ministry of Education(2019R1I1A3A01055643).  The second  author This work was supported 
by the National Research Foundation of Korea(NRF) grant funded by the Korea government(MSIT) (2019R1F1A1058248).}

\subjclass[2010]{14H10, 14H51, 14J10}

\keywords{algebraic curve, limit linear series, Brill-Noether locus, moduli of curves}

% \pagestyle{myheadings} \markboth{Seonja Kim} {Brill-Noether loci in codimension two}

%\date{\today}

\begin{abstract}

  In the present paper  we  investigate conditions for the non-existence of a limit linear series on   a curve of  compact type such that  two smooth curves   are bridged  by a chain of  two elliptic curves. Combining this work with  results on the existence of a smoothable limit linear series on such a curve, we show relations among Brill-Noether loci of codimension at most  two in the moduli space of complex curves.  Specifically,  Brill-Noether loci of codimension two have  mutually distinct supports.
  \end{abstract}

\maketitle
%%%%%%%%%%%%%%%%%%%%%%%%%%%%%%%%%%%%%%%%%%%%%%%%%%%%%%%%%%%%%%%%%%%%%%%%%%
\section{Introduction}
Let $\mathcal M_g$ be the moduli space of smooth complex curves of genus $g$. It is well known that a general curve of genus $g$ has no   linear series $g^r_d$ whose Brill-Noether number $\rho(g,r,d)(:=g-(r+1)(g-d+r))$ is negative.   The Brill-Noether locus $\mathcal M^r_{g,d}$ is defined by  the sublocus of $\mathcal M_g$ whose elements represent curves possessing a linear series $g^r_d$. 
  We also consider  the closure $\overline{\mathcal M}^r_{g,d}$  of $\mathcal M^r_{g,d}$ in the moduli space $\overline{\mathcal M}_g$ of stable curves of genus $g$. We will  call   $\overline{\mathcal M}^r_{g,d}$ a Brill-Noehter locus as well.  In this work, we study  relations among Brill-Noether loci by investigating existence/non-existence of  a smoothable limit linear series on a 
TCBE$(g_1, g_2; 2, t)$ curve in $\overline{\mathcal M}_g$.  Here,  a TCBE$(g_1, g_2; 2, t)$ curve is defined by  a curve of  compact type such that  two smooth curves $Y_1, Y_2$  are bridged  by a chain of  two elliptic curves $E_1, E_2$  as in Figure \ref{figure2},  which satisfies  that for each $i=1,2$
\begin{itemize} 
\item[$(i)$]  $(Y_i, p_i)$ is a general one pointed curve of genus $g_i \geq 2\, ,$
\item[$(ii)$]  $t = \mbox{min} \{ {\tilde t} \in {\mathbb Z}^{>0} ~ |~ \mathcal O _{E_i} ( {\tilde t} ( p_i -q)) \simeq \mathcal O _{E_i} \}.$
\end{itemize}
\begin{figure}[ht]
\begin{center}
\definecolor{uuuuuu}{rgb}{0.26666666666666666,0.26666666666666666,0.26666666666666666}
\begin{tikzpicture}[line cap=round,line join=round,>=triangle 45,x=0.6cm,y=0.54cm]
\clip(-1.5128,-3.5752) rectangle (14.5934,2.4574);
\draw (-1.3746,0.9724) node[anchor=north west] {$E_1$};
\draw (0.3532,-2.5588) node[anchor=north west] {$Y_1$};
\draw (5.0848,0.996) node[anchor=north west] {$E_2$};
\draw (12.7282,-2.5588) node[anchor=north west] {$Y_2$};
\draw (1.4006,0.686) node[anchor=north west] {$p_1$};
\draw (12.4068,0.605) node[anchor=north west] {$p_2$};
\draw [shift={(8.417346487235307,3.201197506431824)}] plot[domain=3.9542093264964904:5.418856023554369,variable=\t]({1.*4.200033317054753*cos(\t r)+0.*4.200033317054753*sin(\t r)},{0.*4.200033317054753*cos(\t r)+1.*4.200033317054753*sin(\t r)});
\draw [shift={(12.962227167630058,-2.1928391136801535)}] plot[domain=0.8035042551907858:2.2616924551374797,variable=\t]({1.*2.853652053617564*cos(\t r)+0.*2.853652053617564*sin(\t r)},{0.*2.853652053617564*cos(\t r)+1.*2.853652053617564*sin(\t r)});
\draw [shift={(0.32984722753346074,1.5112652007648184)}] plot[domain=-1.045064318665168:1.10060559185201,variable=\t]({1.*1.4880109321528001*cos(\t r)+0.*1.4880109321528001*sin(\t r)},{0.*1.4880109321528001*cos(\t r)+1.*1.4880109321528001*sin(\t r)});
\draw [shift={(2.134408888888889,-1.2952014814814814)}] plot[domain=2.1789680993673075:3.773919852983025,variable=\t]({1.*1.8513617980666195*cos(\t r)+0.*1.8513617980666195*sin(\t r)},{0.*1.8513617980666195*cos(\t r)+1.*1.8513617980666195*sin(\t r)});
\draw [shift={(14.384335474860338,1.4570480446927365)}] plot[domain=2.138274432124917:3.9751072881552245,variable=\t]({1.*1.4365120087087788*cos(\t r)+0.*1.4365120087087788*sin(\t r)},{0.*1.4365120087087788*cos(\t r)+1.*1.4365120087087788*sin(\t r)});
\draw [shift={(11.872784723195517,-0.9959667133847234)}] plot[domain=-0.7994205597185902:0.7322189708866035,variable=\t]({1.*2.078566939251402*cos(\t r)+0.*2.078566939251402*sin(\t r)},{0.*2.078566939251402*cos(\t r)+1.*2.078566939251402*sin(\t r)});
\draw (6.8088,-0.8112) node[anchor=north west] {$q$};
\draw [shift={(1.3422919847328243,-1.6999770992366408)}] plot[domain=0.7818969516192444:2.3319197101086147,variable=\t]({1.*2.490364111054216*cos(\t r)+0.*2.490364111054216*sin(\t r)},{0.*2.490364111054216*cos(\t r)+1.*2.490364111054216*sin(\t r)});
\draw [shift={(5.848945217391304,2.867576521739131)}] plot[domain=3.9401793643809606:5.609436216380334,variable=\t]({1.*3.9264258250192947*cos(\t r)+0.*3.9264258250192947*sin(\t r)},{0.*3.9264258250192947*cos(\t r)+1.*3.9264258250192947*sin(\t r)});
\end{tikzpicture}
\end{center}
\vspace{-0.4cm}
\caption{\label{figure2}}
\end{figure}

\noindent{More generally,}   a TCBE$(g_1, g_2; n, t)$ curve is defined  in  \cite{SK} by the same way as  a TCBE$(g_1, g_2; 2, t)$ curve except that   the chain of elliptic curves has  length $n$.  

The second author \cite{SK} gave    conditions on $t$ in terms of $g,r,d$ under which a TCBE$(g_1, g_2; n, t)$ curve in $\overline{\mathcal M}_g$ carries a smoothable limit linear series $g^r_d$.
 Along the line,  the focus of this work lies on finding conditions on $t$ for   a TCBE$(g_1, g_2; 2, t)$ curve not to admit a $g^r_d$. This work combined with the existence conditions given in \cite{SK} will enable us to see some relations among Brill-Noether loci  in  $\mathcal M_g$ or 
 $\overline{\mathcal M}_g$. More precisely, we describe  differences and intersections among   Brill-Noether loci  by using the family of TCBE$(g_1, g_2; 2, t)$ curves in $\overline{\mathcal M}_g$, whose dimension equals $3g-8$. For convenience sake, we define the following:
 $${\Delta}  ^{BE} _g (g_1, g_2; 2,t) : = \{ [C] \in \overline {\mathcal M} _g ~|~ C \mbox{ is a  TCBE}(g_1, g_2; 2,t ) \mbox{ curve, } g_1\geq g_2\} .$$
 
 In case    $\rho (g, r, d) =-1$,  $\overline{\mathcal M}^r_{g,d}$ is  irreducible and of codimension one in $\overline{\mathcal M}_g$ \cite{EH1989, St}, which is called a  Brill-Noether divisor. Relationships among  Brill-Noether divisors  inside  $\mathcal M_g$ are useful for  dealing with  the birational geometry of $\mathcal M_g$  \cite{EH1987-2, Farkas, Farkas_home}.  It was known   that any two Brill-Noether divisors  have mutually  distinct supports in  $\mathcal M_g$ \cite{CKK1, CKK2}.
On the one hand,  every    component of $\mathcal M^r_{g,d}$ has codimension at most $-\rho (g, r, d)$ in $\mathcal M_{g}$; further, in case  $-3 \leq\rho (g,r,d) \leq -1$, each component of  $\mathcal M^r_{g,d}$ is of codimension $-\rho (g, r, d)$ \cite{Edidin, EH1989, St}.

The objective  of this study  is to  find  relations between  a locus $\pmb{\bigtriangleup}^{BE} _g (g_1, g_2;2,t)$   and a Brill-Noether locus  $\overline{\mathcal M}^r_{g,d}$. Theorem \ref{mathm} in the present paper shows the following:

{\it Let $\rho (g,r,d)=\rho  <0$ and $t\geq 4$.  Then we have  
 \begin{equation*}
 \pmb{\bigtriangleup}^{BE} _g (g_1, g_2;2,t) \cap \overline{\mathcal M}^r_{g,d} =\emptyset,\end{equation*} 
  when
\begin{eqnarray*}   t  \geq \begin{cases}   g-d+2r  +(g_1 -g_2) +\delta_{g_1, g_2} &\mbox{  in  case  }\rho = -1, \\
\frac{2}{-\rho }\big( \, g-d+2r -2 +(g_1 -g_2) +\delta_{g_1, g_2}\big)
&\mbox{ in  case } \rho \leq -2.
\end{cases}
\end{eqnarray*}
Here $\delta_{g_1, g_2}$ denotes the  Kronecker delta.}

This result is a kind of counter part of Theorem 1.1 in \cite{SK} for the case $n=2$: under the hypotheses $\rho (g,r,d) =-2+h$ and $g_1 -h\geq g_2 \geq 2$ with $h=0,1$,  we get 
 \begin{equation*}
 {\Delta} ^{BE}_g (g_1,g_2; 2,t) \subset \overline{\mathcal M}_{g, d}^r,
 \end{equation*} 
 if
   \begin{equation} \label{ranget}
   \begin{cases} 
   2\leq t\leq g_2 +h+1 , \ \ t\equiv g_1 +1 \ (\mbox{mod }2)  &\mbox{ in case  } r =1\\
  t_{min}  \leq t \leq g-d+2r -2+h  \ \ &\mbox{  in case  } r \geq 2
    \end{cases}
 \end{equation}
where \begin{equation} \label{tmin} t_{min} := \begin{cases}  r+2+(g_1 -g_2) 
 \ \mbox{ if  } r   \mbox{ is even,}\\
r +2 +  \frac{g_1 -g_2+h  }{2}  \ \ \ \ \ \mbox{ if  both } r   \mbox{  and } d+2 + \frac{g_1 -g_2+h}{2} \mbox{ are odd,}\\
 r +2  +1 + \frac{ g_1 -g_2+h }{2}     \ \mbox{ if  } r   \mbox{ is odd  and } d+2 +\frac{g_1 -g_2+h}{2} \mbox{  is even.}
\end{cases}  
\end{equation}

It is interesting that, for the case $\rho  (g,r,d)=-2$ and $r\geq 2$,  the above two results on the non-existence/existence of  a smoothable limit $g^r_d$ tell  the sharpness of each other when $(g_1,g_2) = (\lceil \frac{g-2}{2}\rceil, \lfloor \frac{g-2}{2}\rfloor),$ where  $\lceil \frac{g-2}{2}\rceil$ (resp. $\lfloor \frac{g-2}{2}\rfloor$) is the smallest  integer greater (resp. the largest integer less) than or equal to $\frac{g-2}{2}$.
  For such a $(g_1,g_2)$,  the two results have only one gap for sharpness in case $\rho  (g,r,d)=-1$ and $r\geq 2$.   
From this observation we get  corollaries as follows:
\begin{enumerate}
 \item   
 Brill-Noether loci of Brill-Noether number $\rho =-2$ have  mutually distinct supports  in $\mathcal M_g$
(see  Corollary  \ref{mcoro4}),
 \item   $ {\mathcal M}^r_{g,d}$ with $\rho (g,r,d)  =-2$ is not contained in any Brill-Noether divisor ${\mathcal M}^s_{g,e}$ with  $e-2s\geq d-2r+3$ (see Corollary  \ref{macor2}),
\item if $X$ is  a general plane curve of degree $d$ and genus $g$ with  $34\leq g\leq \frac{3d-4}{2}$, then  its   smooth model $\tilde{X}$ does not admit  $g^s_e$   with  $s\geq 2$ and $\rho (g,s,e)<0$ except  $ g^2_d$ and   $ K_{\tilde{X} }-g^2_d$
(see Remark \ref{rmknet}).
\end{enumerate}

Considering the above, one can notice that results on the non-existence/existence of a limit $g^r_d$
 would play a role in studying not only relations among Brill-Noether loci but also embeddings of individual curves belonging to a prescribed  Brill-Noether locus of codimension at most two.
In  Example \ref{exam34} we consider  $\overline{\mathcal M}_{34}$ in which the  Brill-Noether loci  with $\rho =-2$ are $\overline{\mathcal M} ^1_{34, 17}, \  \overline{\mathcal M} ^2_{34, 24}, \ \overline{\mathcal M} ^3_{34, 28} , \  \overline{\mathcal M} ^5_{34, 33}$ and  the  Brill-Noether divisor is  $\overline{\mathcal M} ^4_{34, 31}$. It 
presents their relations as follows:
\begin{enumerate}
\item[$(i)$] $\pmb{\bigtriangleup} ^{BE}_{34} (16,16;2,9)  \subset  \overline{\mathcal M} ^1_{34, 17} \cap \overline{\mathcal M} ^2_{34, 24} \cap \overline{\mathcal M} ^3_{34, 28} \cap \overline{\mathcal M}^5_{34, 33}  \cap \overline{\mathcal M} ^4_{34, 31}$,\\
\item[$(ii)$] $\bigcup\limits_{t=13,15,17}\pmb{\bigtriangleup} ^{BE}_{34} (16,16;2,t)  \subset  \overline{\mathcal M} ^1_{34, 17} -(\overline{\mathcal M}^2_{34, 24} \cup \overline{\mathcal M} ^3_{34, 28}\cup \overline{\mathcal M}^5_{34, 33}\cup \overline{\mathcal M} ^4_{34,31}  )$.
\end{enumerate}
 These are not trivial since  $ \pmb{\bigtriangleup} ^{BE}_{34} (16,16;2,9) $ is of codimension five and  Brill-Noether loci of codimension two have mutually distinct supports (see Corollary  \ref{mcoro4}). The result $(ii)$  tells how a general   curve $X$ in  ${\mathcal M} ^1_{34, 17}$ is embedded  by $|K_X -g^1_{17}|$. For instance, the relation  $\pmb{\bigtriangleup} ^{BE}_{34} (16,16;2,t) \subset   \overline{\mathcal M} ^1_{34, 17} -  \overline{\mathcal M} ^2_{34, 24}$  implies that if  $L_{\eta}=g^1_{17}$ on a smooth curve $C_{\eta}$ is a  smoothing of   $L=g^1_{17}$ on $C \in\pmb{\bigtriangleup} ^{BE}_{34} (16,16;2,t)$ then the residual $|K_{C_{\eta}}- L_{\eta}|$  embeds  $C_{\eta}$ into ${\mathbb P} H^0 (C_{\eta},K_{C_{\eta}} - L_{\eta})$  such that any seven  points of the curve are in general position.
 \vspace{0.4cm}
 
 \noindent
{\bf Acknowledgements}
We thank KIAS for the warm hospitality when we were associate members in KIAS .

 \vspace{0.4cm}

\section{Preliminaries}
In this section, we review definitions and theorems on limit linear series in   \cite{EH1986, EH1987-2} which will be used to verify   the existence/non-existence of a smoothable  limit linear series $g^r_d$ on a $ TCBE(g_1, g_2; 2,t)$ curve.
If $C$ is a curve of compact type, {\it a (crude) limit} $g^r_d$ on $C$ is a collection of ordinary linear
series $L=\{L_Y\in G^r_d(Y)\ |\ Y\subset C {\text{ is a component}}\}$ satisfying the
 compatibility condition: if $Y$ and $Z$ are components of $C$ with $\{p\}=Y\cap Z$, then
\begin{equation}\label{limit}
a^{L_Y}_j(p)+a^{L_Z}_{r-j}(p)\geq d \   {\text{ for }} 0\le j\le r,
\end{equation}
where $\{a^L_j(p): a^L_0(p)< a^L_1(p)< \cdots < a^L_r(p)\}$ is the vanishing
sequence  of $L$ at $p$. Recall that the sequence $(\alpha^L_0(p),\cdots,\alpha^L_r(p))$ with $\alpha^L_j(p):=a^L_j(p)-j$ is called {\it the ramification sequence of $L$ at $p$}. If the equality in  (\ref{limit}) holds everywhere, $L$ is said to be  {\it a refined limit}
$g^r_d$. The linear series $L_Y\in G^r_d(Y)$ is called {\it the $Y$-aspect of $L$}.
A limit linear series $g^r_d=L$ on $C$   is said to be {\it smoothable} if there is a flat family $\pi : \mathcal C \to B$, $B=Spec(R),$ for a discrete valuation ring $R$ with $C=C_0$ and a $g^r_d=L_\eta$ on $C_\eta$ for a generic point $\eta \in B$ whose limit is $L$ on $C=C_0$.

Therefore, in order to study  problems related to the  existence/non-existence of
 a smoothable  limit $g^r_d$ on a  TCBE$(g_1,g_2;2,t)$ curve, we first examine conditions for  the existence of an ordinary $g^r_d$ satisfying specific vanishing conditions on each  component of the   TCBE$(g_1,g_2;2,t)$ curve. Thus  results on linear series with a prescribed vanishing sequence  are essential in this work.

\begin{Theorem}[\cite{EH1987-2}, (1.2) Proposition]\label{1987-2}
A general pointed curve $(C,q)$ of genus $g$ possesses a $g^r_d$ with ramification sequence $(\alpha_0,\cdots, \alpha_r)$ at $q$ if and only if
\begin{equation}\label{ineqdim}
\sum^r_{j=0}(\alpha_j+g-d+r)_+\le g,\end{equation}
where  $(\alpha_j+g-d+r)_+:=\max\{0,\alpha_j+g-d+r\}.$
\end{Theorem}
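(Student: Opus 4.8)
This statement is a pointed Brill--Noether existence theorem, so I read the right-hand condition as the nonemptiness criterion for the variety $G^r_d(C,(q,\alpha))$ of linear series of degree $d$ and dimension $r$ on $C$ whose ramification sequence at $q$ is at least $(\alpha_0,\dots,\alpha_r)$. The plan is to prove both implications by specializing $(C,q)$ to a chain $X=E_1\cup\cdots\cup E_g$ of $g$ general elliptic curves, with $q$ a general point of the end component, and transporting the question to $X$ through the theory of limit linear series of \cite{EH1986, EH1987-2} together with the smoothing (regeneration) theorem. The existence direction is obtained by building a limit $g^r_d$ on $X$ with the prescribed ramification at $q$ and smoothing it; the non-existence direction is obtained by showing that no such limit series can exist on $X$ once the numerical condition fails, and then specializing.

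Before the geometry I would fix the numerology, since the meaning of the truncation $(\,\cdot\,)_+$ drives the whole argument. Writing $a_j=\alpha_j+j$ for the vanishing orders, the index $j$ ``costs'' $\alpha_j+g-d+r$ when this quantity is positive, whereas an index with $\alpha_j+g-d+r\le 0$ prescribes a vanishing order no larger than what a general member of a linear series of this degree already exhibits at $q$, hence imposes no genuine condition and must be credited $0$ rather than a negative amount. Summing, the refined expected dimension of $G^r_d(C,(q,\alpha))$ is $g-\sum_j(\alpha_j+g-d+r)_+$, and the theorem asserts that for general $(C,q)$ this locus is nonempty exactly when that number is nonnegative, i.e. when \eqref{ineqdim} holds. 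It is worth noting that \eqref{ineqdim} is in general strictly stronger than the nonnegativity of the adjusted Brill--Noether number $g-\sum_j(\alpha_j+g-d+r)$, because discarding the negative summands can only enlarge the left-hand side; the truncation precisely forbids spending the surplus of an over-satisfied condition to pay for another.

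For the existence (``if'') direction I would construct, on the chain $X$, a refined limit $g^r_d$ with ramification $\ge\alpha$ at $q$. A refined limit series on $X$ amounts to an aspect $L_{E_i}\in G^r_d(E_i)$ on each component together with vanishing sequences at $q$ and at the nodes satisfying the compatibility \eqref{limit} with equality. The technical core is a local analysis of a single elliptic bridge: one determines, on a general elliptic curve carrying two general marked points, how far the vanishing sequence may be advanced in passing from one node to the next. Propagating this computation along the chain shows that each of the $g$ elliptic components contributes exactly one unit to an available ``budget,'' while realizing the ramification sequence $\alpha$ at the end consumes the truncated sum $\sum_j(\alpha_j+g-d+r)_+$ of that budget; hence a compatible assignment exists precisely when \eqref{ineqdim} holds. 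Once such a limit series is produced within a family of the expected dimension, the regeneration theorem guarantees it is smoothable, and therefore a genuine $g^r_d$ with ramification $\ge\alpha$ at $q$ exists on the general smoothing $(C,q)$.

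For the non-existence (``only if'') direction I would argue by specialization: if the general pointed curve carried such a series, then, by the properness of the space of limit linear series, the special fibre $X$ would carry a limit $g^r_d$ with ramification $\ge\alpha$ at $q$; but the same elliptic-bridge bookkeeping shows that when $\sum_j(\alpha_j+g-d+r)_+>g$ the budget is exceeded and no compatible assignment of vanishing sequences exists on $X$, a contradiction. (Alternatively, the upper bound follows from the dimension theorem for $G^r_d(C,(q,\alpha))$ together with the refined codimension count above.) I expect the main obstacle to be exactly this combinatorial heart: carrying out the single-elliptic-bridge computation cleanly and propagating it along the chain so that the achievable ramification at the end matches the truncated sum, and then checking that the limit series sits in a family of dimension equal to the expected value so that the regeneration theorem applies and yields smoothability.
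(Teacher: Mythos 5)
The paper does not prove this statement: it is quoted verbatim as Proposition (1.2) of \cite{EH1987-2}, so there is no in-paper argument to compare yours against. Measured against the actual Eisenbud--Harris proof, your strategy --- degenerate $(C,q)$ to a chain of $g$ elliptic curves with $q$ on an end component, build and regenerate a limit $g^r_d$ for the ``if'' direction, and use properness of the space of limit linear series plus the same bookkeeping for the ``only if'' direction --- is exactly the standard one, and your reading of $\sum_j(\alpha_j+g-d+r)_+$ as a budget of one unit per elliptic component is the right picture.

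The gap is that essentially all of the mathematical content sits in the step you defer. Two points in particular cannot be waved at. First, the single-component lemma: on an elliptic curve $E$ with $p-q$ non-torsion, any $g^r_d$ satisfies $a_j(p)+a_{r-j}(q)\le d$ for every $j$, with equality for at most one index; you must show that iterating this across the $g$ components together with the node compatibility \eqref{limit} yields precisely the \emph{truncated} sum $\sum_j(\alpha_j+g-d+r)_+$ and not the untruncated count $\sum_j(\alpha_j+g-d+r)$. The truncation is exactly where such an argument either works or silently fails: an index with $\alpha_j+g-d+r\le 0$ must be shown to propagate ``for free'' without donating its slack to the indices that do impose conditions. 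Second, to invoke the regeneration theorem you must exhibit the constructed limit series inside a component of the relative space of limit linear series of exactly the expected dimension; nonemptiness alone does not give smoothability. (For the ``only if'' direction there is also a shorter route that avoids the chain entirely: apply the dimension theorem, Theorem 4.5 of \cite{EH1986} --- quoted as Remark \ref{dimpropergen} in this paper --- to the sub-linear series spanned by the sections vanishing to order at least $a_{k+1}$ at $q$, where $\{0,\dots,k\}$ is the initial segment of indices with $\alpha_j+g-d+r\le 0$; this converts the truncated inequality into the nonnegativity of an honest adjusted Brill--Noether number.) As written, your text is a correct road map of the known proof rather than a proof.
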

\begin{Remark}\label{dimpropergen} Let $( C, p) $ be a  general pointed curve of genus $g$.  Then  Theorem 4.5 in \cite{EH1986} tells that every component of
\begin{equation*}
 \{ L \in G^r_d (C) ~|~ \alpha _j ^L (p) = \alpha _j ,~ j=0, \cdots , r \}
\end{equation*}
has dimension  $ \rho (g,r,d) -\sum _{j=0}^r \alpha_j$, which equals $g-\sum^r_{j=0}(\alpha_j+g-d+r)$.
\end{Remark}
 
 For an $m$-pointed curve $(Y, p_1, \dots, p_m)$ of genus $g_Y$,  {\it  the adjusted  Brill-Noether number of $L_Y$ with respect to} $\{ p_1, \dots, p_m\}$ is defined by 
\begin{equation}\label{adbn}
\rho (L_Y, p_1, \dots, p_m ):= \rho (g_Y,r,d) - \sum _{i=1}^m  \sum _{j=0}^r\alpha^{L_Y}_j  (p_i).
\end{equation}
The following lemma shows a relation  between the additivity of  adjusted  Brill-Noether numbers of aspects and  the  refinedness  of a limit $g^r_d$.

\begin{Lemma}\label{additive0}
Let  $C:=Y_1 \cup \cdots \cup Y_n$   be a  curve of compact type  of genus $g$ which is a chain of smooth curves $Y_1, \cdots,  Y_n$   with $ Y_i \cap Y_{i+1} =p_i$  for $i= 1,\cdots, n-1$, as in Figure \ref{figure12}.
Assume that $C$ possesses a limit linear series $g^r_d := \{ L_{Y_1},  \cdots , L_{Y_n} \}$.
 Then,
 $$\rho ( L_{Y_1}, p_1 ) +\sum _{i=2} ^{n-1}  \rho ( L_{Y_i}, p_{i-1}, p_i ) +\rho ( L_{Y_n}, p_{n-1} ) = \rho (g,r,d) -\sum_{i=1}^{n-1} \sum_{j=0}^{r}  \eta _{ij},$$
 where
$  \eta _{ij} := a_j ^{L_{Y_i}}( p_i)+ a _{r-j} ^{L_{Y_{i+1}}}( p_i)-d$.
\end{Lemma}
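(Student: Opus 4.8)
The plan is to treat this as a bookkeeping identity: both sides are assembled from the genera $g(Y_i)$ and from the vanishing/ramification data of the aspects $L_{Y_i}$ at the nodes, so I would expand each side in terms of these quantities and match them. The hypothesis that $C$ carries a limit $g^r_d$ is used only to guarantee that every aspect $L_{Y_i}$ is an honest $g^r_d$ with well-defined vanishing sequences $a^{L_{Y_i}}_{\bullet}(p)$ at the nodes it meets; the inequality (\ref{limit}) itself is not needed, since the claim is a pure identity that holds term by term.

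First I would record the additivity of the genus. Because $C$ is of compact type and is a chain of $n$ smooth components joined at $n-1$ nodes, its arithmetic genus satisfies $g=\sum_{i=1}^n g(Y_i)$. Writing $g_i:=g(Y_i)$ and using $\rho(h,r,d)=h-(r+1)(h-d+r)$, a one-line computation gives the scalar identity
\begin{equation*}
\sum_{i=1}^n \rho(g_i,r,d)-(n-1)(r+1)(d-r)=\rho(g,r,d),
\end{equation*}
which will absorb the difference between $\sum_i\rho(g_i,r,d)$ and $\rho(g,r,d)$.

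Next I would expand the left-hand side using the definition (\ref{adbn}) of the adjusted Brill-Noether number. Collecting the unadjusted terms produces $\sum_{i=1}^n\rho(g_i,r,d)$, while the ramification corrections organize themselves node by node: each node $p_i$ (for $1\le i\le n-1$) appears exactly twice, once as the marked point of the $Y_i$-aspect and once as the marked point of the $Y_{i+1}$-aspect, so that the interior components contribute both their nodes and the endpoints $Y_1,Y_n$ each contribute only one. Hence the subtracted part of the left-hand side equals
\begin{equation*}
\sum_{i=1}^{n-1}\sum_{j=0}^r\bigl(\alpha^{L_{Y_i}}_j(p_i)+\alpha^{L_{Y_{i+1}}}_j(p_i)\bigr).
\end{equation*}
The key manipulation is then to convert these ramification sequences into vanishing sequences and match them against $\eta_{ij}$. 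Using $\alpha^L_j=a^L_j-j$ together with the relabelling $\sum_{j=0}^r\alpha^{L_{Y_{i+1}}}_j(p_i)=\sum_{j=0}^r\alpha^{L_{Y_{i+1}}}_{r-j}(p_i)$, each inner summand becomes $a^{L_{Y_i}}_j(p_i)+a^{L_{Y_{i+1}}}_{r-j}(p_i)-r=\eta_{ij}+(d-r)$, whence for each node $\sum_{j=0}^r\bigl(\alpha^{L_{Y_i}}_j(p_i)+\alpha^{L_{Y_{i+1}}}_j(p_i)\bigr)=\sum_{j=0}^r\eta_{ij}+(r+1)(d-r)$. Summing over the $n-1$ nodes and inserting the genus identity from the second step yields exactly $\rho(g,r,d)-\sum_{i=1}^{n-1}\sum_{j=0}^r\eta_{ij}$, which is the asserted equality.

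There is no substantive obstacle here; the only point requiring care is the reindexing $j\mapsto r-j$ in one of the two ramification sums, which is precisely what turns the naively appearing $a^{L_{Y_{i+1}}}_j$ into the $a^{L_{Y_{i+1}}}_{r-j}$ demanded by the definition of $\eta_{ij}$. Beyond that I would only double-check the endpoint bookkeeping, since $Y_1$ and $Y_n$ carry a single marked node while every interior component carries two, to confirm that each $\alpha$-term is counted exactly once.
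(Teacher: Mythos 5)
Your proposal is correct and is essentially the paper's own argument: both proofs expand the adjusted Brill--Noether numbers via the definition \eqref{adbn}, reindex $j\mapsto r-j$ to convert ramification sums into the vanishing sums appearing in $\eta_{ij}$, and use $\sum_i g(Y_i)=g$ to collapse the genus terms. The only cosmetic difference is that you separate the scalar identity $\sum_i\rho(g_i,r,d)-(n-1)(r+1)(d-r)=\rho(g,r,d)$ from the node-by-node bookkeeping, whereas the paper carries out the whole expansion in one chain of equalities.
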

\begin{figure}[ht]
\begin{center}
\definecolor{uuuuuu}{rgb}{0.26666666666666666,0.26666666666666666,0.26666666666666666}
\begin{tikzpicture}[line cap=round,line join=round,>=triangle 45,x=0.38cm,y=0.38cm]
\clip(-1.8192,-2.966) rectangle (29.827,1.656);
\draw [shift={(1.1904820105820104,-1.0081375661375664)}] plot[domain=0.7015005375176029:2.3251110471055663,variable=\t]({1.*2.322001365871798*cos(\t r)+0.*2.322001365871798*sin(\t r)},{0.*2.322001365871798*cos(\t r)+1.*2.322001365871798*sin(\t r)});
\draw [shift={(5.5563144486692,2.9832300380228136)}] plot[domain=3.907468126624897:5.2959953917966915,variable=\t]({1.*3.5962840423788065*cos(\t r)+0.*3.5962840423788065*sin(\t r)},{0.*3.5962840423788065*cos(\t r)+1.*3.5962840423788065*sin(\t r)});
\draw [shift={(6.983003614457831,1.350860642570281)}] plot[domain=3.404586706334768:5.720937158008401,variable=\t]({1.*2.658203454761023*cos(\t r)+0.*2.658203454761023*sin(\t r)},{0.*2.658203454761023*cos(\t r)+1.*2.658203454761023*sin(\t r)});
\draw [shift={(11.159353692614769,-1.3394774451097815)}] plot[domain=0.8021221774449528:2.557777401962595,variable=\t]({1.*2.309962707201369*cos(\t r)+0.*2.309962707201369*sin(\t r)},{0.*2.309962707201369*cos(\t r)+1.*2.309962707201369*sin(\t r)});
\draw [shift={(18.24482338257769,-1.6167074375955195)}] plot[domain=0.6686200153256092:2.231298679950041,variable=\t]({1.*2.423130164757994*cos(\t r)+0.*2.423130164757994*sin(\t r)},{0.*2.423130164757994*cos(\t r)+1.*2.423130164757994*sin(\t r)});
\draw [shift={(22.996513592233015,2.3519203883495163)}] plot[domain=3.8549312636883593:5.462806160162388,variable=\t]({1.*3.7693514561807735*cos(\t r)+0.*3.7693514561807735*sin(\t r)},{0.*3.7693514561807735*cos(\t r)+1.*3.7693514561807735*sin(\t r)});
\draw [shift={(25.277962798634814,1.13836257110353)}] plot[domain=3.464058498175733:5.7383840912891,variable=\t]({1.*2.8846465367389906*cos(\t r)+0.*2.8846465367389906*sin(\t r)},{0.*2.8846465367389906*cos(\t r)+1.*2.8846465367389906*sin(\t r)});
\draw [shift={(30.30336872998933,-1.8508918890074666)}] plot[domain=1.191933773174612:2.6129613223844443,variable=\t]({1.*2.9627957756383947*cos(\t r)+0.*2.9627957756383947*sin(\t r)},{0.*2.9627957756383947*cos(\t r)+1.*2.9627957756383947*sin(\t r)});
\draw (-1.2678,1.9802) node[anchor=north west] {$Y_1$};
\draw (3.529,1.819) node[anchor=north west] {$Y_2$};
\draw (13.172,0.6356) node[anchor=north west] {$\cdots\cdots$};
\draw (15.2616,1.8992) node[anchor=north west] {$Y_{n-1}$};
\draw (21.5268,1.5082) node[anchor=north west] {$Y_n$};
\draw (4.2574,-0.5748) node[anchor=north west] {$p_1$};
\draw (22.5178,-1.3912) node[anchor=north west] {$p_{n-1}$};
\end{tikzpicture}
\end{center}
\vspace{-0.4cm}
\caption{\label{figure12}}
\end{figure}
\begin{proof} The definition of an adjusted Brill-Noether number in \eqref{adbn} yields that
\begin{eqnarray*}
&&\rho ( L_{Y_1}, p_1 ) +\sum _{i=2} ^{n-1}  \rho ( L_{Y_i}, p_{i-1}, p_i ) +\rho ( L_{Y_n}, p_{n-1} ) \\
&=& \{  -rg(Y_1) -\sum _{j=0}^r(r+\alpha _j^{L_{Y_1}}( p_1) -d)\} +
 \{ - rg (Y_n)-\sum _{j=0}^r(r+\alpha_j ^{L_{Y_n}}( p_{n-1}) -d) \}\\
&&+\sum_{i=2}^{n-1} \{- rg(Y_i)-\sum _{j=0}^r(r+\alpha_j ^{L_{Y_i}}( p_{i-1})
+\alpha_j ^{L_{Y_i}}( p_{i}) -d) \}  \\
&=& \{  -rg(Y_1) -\sum _{j=0}^r(j+a _{r-j}^{L_{Y_1}}( p_1) -d)\} +
 \{ - rg (Y_n)-\sum _{j=0}^r(r-j+a_{j} ^{L_{Y_n}}( p_{n-1}) -d) \}\\
&&+\sum_{i=2}^{n-1} \{ -rg(Y_i)-\sum _{j=0}^r(a_j ^{L_{Y_i}}( p_{i-1})
+a_{r-j} ^{L_{Y_i}}( p_{i}) -d) \}  \\
&=& -rg-(r+1)(r-d)-\sum_{i=1}^{n-1} \sum _{j=0}^r(a_{r-j} ^{L_{Y_i}}( p_i)+ a _{j} ^{L_{Y_{i+1}}}( p_i)-d)\\
 &=& \rho(g,r,d) -\sum_{i=1}^{n-1} \sum_{j=0}^{r}  \eta _{ij},
\end{eqnarray*}
since $g(Y_1) +\cdots + g(Y_n) =g$.  Thus the result follows.
\end{proof}

Concerning  adjusted Brill-Noether numbers on an elliptic curve, we get an equation as in what follows.
\begin{Lemma}[\cite{SK}, Lemma 2.5]\label{zeros}
Let $L_E$ be a $g^r_d$ on an elliptic curve $E$ and
$\nu _{j} :=d-(a_j^{L_E} (p) + a_{r-j}^{L_E} (q)) $ for $ p,q \in E$.
 Then we have
 \begin{equation}\label{zero}
\#  \{j \ | \ \nu_{j} =0, j=0, \cdots, r \}= -\rho (L_E , p, q) +1 +\sum _{j=o} ^r (\nu _j -1)_{+}
\end{equation}
where $(\nu _j -1)_{+} := \mbox{max} \{\nu _j -1 , 0\}$.
\end{Lemma}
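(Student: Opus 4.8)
The plan is to reduce the identity \eqref{zero} to two ingredients: a single linear relation expressing $\sum_j \nu_j$ through the adjusted Brill--Noether number, and the positivity $\nu_j\ge 0$ for every $j$. First I would rewrite each $\nu_j$ using the ramification sequences. Since $a_j^{L_E}(p)=\alpha_j^{L_E}(p)+j$ and $a_{r-j}^{L_E}(q)=\alpha_{r-j}^{L_E}(q)+(r-j)$, one gets $\nu_j = d-r-\alpha_j^{L_E}(p)-\alpha_{r-j}^{L_E}(q)$. Summing over $j=0,\dots,r$, reindexing the $q$-sum, and substituting the definition \eqref{adbn} of $\rho(L_E,p,q)$ on the genus-one curve $E$ (so that $\rho(1,r,d)=1-(r+1)(1-d+r)$), the terms $(r+1)(d-r)$ and $(r+1)(1-d+r)$ combine to $r+1$, and I expect to land on the clean relation
\[
\sum_{j=0}^r \nu_j = \rho(L_E,p,q)+r.
\]

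The geometric heart of the argument is to prove that $\nu_j\ge 0$ for all $j$; this is precisely what makes the formula hold, since a single negative $\nu_j$ would break the count. Let $V\subseteq H^0(E,L_E)$ be the $(r+1)$-dimensional space defining the $g^r_d$, and set $V_p(m)=\{s\in V : \mathrm{ord}_p(s)\ge m\}$ and likewise $V_q(n)$. By the definition of the vanishing sequences, $\dim V_p\big(a_j^{L_E}(p)\big)=r+1-j$ and $\dim V_q\big(a_{r-j}^{L_E}(q)\big)=j+1$, so their dimensions inside $V$ sum to $r+2>r+1=\dim V$. Hence the intersection is nonzero: there is a section $s\ne 0$ vanishing to order at least $a_j^{L_E}(p)$ at $p$ and at least $a_{r-j}^{L_E}(q)$ at $q$. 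As $\mathrm{div}(s)$ is effective of degree $d$ and $p\ne q$, this forces $a_j^{L_E}(p)+a_{r-j}^{L_E}(q)\le d$, i.e.\ $\nu_j\ge 0$. I expect this flag-intersection step to be the main obstacle, both because it is the only genuinely geometric input and because one must be attentive to the case $p=q$, which is handled by combining the orders at the single point.

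With positivity in hand the rest is bookkeeping. Since $\nu_j\ge 0$, the truncation satisfies $(\nu_j-1)_+=\nu_j-1$ when $\nu_j\ge 1$ and $(\nu_j-1)_+=0$ when $\nu_j=0$. Writing $N:=\#\{j:\nu_j=0\}$, the terms with $\nu_j=0$ contribute nothing to $\sum_j\nu_j$, so
\[
\sum_{j=0}^r (\nu_j-1)_+ = \sum_{j=0}^r \nu_j - \#\{j:\nu_j\ge 1\} = \big(\rho(L_E,p,q)+r\big)-\big((r+1)-N\big).
\]
Simplifying the right-hand side gives $\rho(L_E,p,q)-1+N$, and solving for $N$ yields exactly $N=-\rho(L_E,p,q)+1+\sum_{j=0}^r(\nu_j-1)_+$, which is \eqref{zero}. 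Thus once the two preliminary relations are established, the identity follows by a one-line rearrangement.
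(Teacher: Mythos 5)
Your proof is correct. Note that the paper does not actually prove this lemma --- it is imported verbatim from \cite{SK}, Lemma 2.5 --- so there is no in-paper argument to compare against; what you give is the standard derivation one would expect there. Your two ingredients are exactly right: the linear identity $\sum_{j}\nu_j=\rho(L_E,p,q)+r$, which follows from \eqref{adbn} with $\rho(1,r,d)=1-(r+1)(1-d+r)$ exactly as you compute, and the nonnegativity $\nu_j\ge 0$, obtained from the flag-dimension count $\dim V_p\bigl(a_j^{L_E}(p)\bigr)+\dim V_q\bigl(a_{r-j}^{L_E}(q)\bigr)=(r+1-j)+(j+1)>\dim V$, which produces a nonzero section whose effective degree-$d$ divisor contains $a_j^{L_E}(p)\,p+a_{r-j}^{L_E}(q)\,q$. (In the paper's application $p$ and $q$ are distinct nodes of $E_i$, so the $p=q$ caveat you flag never arises.) The final rearrangement $\sum_j(\nu_j-1)_+=\sum_j\nu_j-\bigl((r+1)-N\bigr)$ with $N=\#\{j:\nu_j=0\}$ is exactly the bookkeeping that yields \eqref{zero}.
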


%%%%%%%%%%%%%%%%%%%%%%%%%%%%%%%%%%%%%%%%%%%%%%%%%%%%%%%%%%%%%%%%%%%%%

\section{Limit linear series on a TCBE$\pmb{(g_1, g_2;2,t)}$ curve}
In this section we investigate  necessary conditions on $t$ for a TCBE$(g_1,g_2;2,t)$ curve to admit  a limit linear series   $g^r_d$ with $\rho (g,r,d)<0$, whereas sufficient conditions  for  the existence of a smoothable  limit  $g^r_d$ with $\rho (g,r,d)=-1, -2$ were given in  Theorem 1.1 in \cite{SK}.  Our result combined with  Theorem 1.1 \cite{SK}  gives rise to  some relations  among  Brill-Noether loci corresponding to $\rho =-1,-2$  in the moduli space $\mathcal M_g$.
In particular,  it will be shown that Brill-Noether loci of codimension two have mutually distinct supports in $\mathcal M_g$  as in the case of Brill-Noether divisors \cite{CKK1, CKK2}.

Before going to the theorem, we demonstrate some lemmas for the proof of our main theorem.

\begin{Lemma}\label{additive}
Let  $C:=Y_1 \cup \cdots \cup Y_n$   be the same curve of compact type  of genus $g$  as  in Lemma \ref{additive0}.
Assume that $C$ possesses a limit linear series $g^r_d := \{ L_{Y_1},  \cdots , L_{Y_n} \}$.
 Then,  for each $i=1, \dots,  n-1$
\begin{enumerate}
   \item
$a_r ^{L_{Y_{i+1}}}(p_i) \leq g(Y_i) +r +\rho ( L_{Y_i}, p_i ) +\eta _{i0}$ \ in case  $(Y_i, p_i) $ is general,
  \item
$a_r ^{L_{Y_{i}}}(p_{i}) \leq g(Y_{i+1}) +r +\rho ( L_{Y_{i+1}}, p_{i}) +\eta _{ir}$   \ in case  $(Y_{i+1}, p_{i}) $ is general,
\end{enumerate}
where
$  \eta _{ij} := a_j ^{L_{Y_i}}( p_i)+ a _{r-j} ^{L_{Y_{i+1}}}( p_i)-d$.
\end{Lemma}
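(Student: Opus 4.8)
The plan is to establish (1) directly and then obtain (2) by the left--right symmetry of the chain (the roles of $Y_i$ and $Y_{i+1}$ at the node $p_i$, and of the indices $0$ and $r$, are interchanged). First I would unwind the definition $\eta_{i0}=a_0^{L_{Y_i}}(p_i)+a_r^{L_{Y_{i+1}}}(p_i)-d$ to write
\[
a_r^{L_{Y_{i+1}}}(p_i)=d-a_0^{L_{Y_i}}(p_i)+\eta_{i0}.
\]
Substituting this into the desired inequality and cancelling $\eta_{i0}$ from both sides, (1) becomes equivalent to the purely intrinsic bound
\[
\rho(L_{Y_i},p_i)\ \geq\ d-a_0^{L_{Y_i}}(p_i)-g(Y_i)-r,
\]
which involves only the $Y_i$-aspect. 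Since $a_0^{L_{Y_i}}(p_i)=\alpha_0^{L_{Y_i}}(p_i)$, the right-hand side is exactly $-\big(\alpha_0^{L_{Y_i}}(p_i)+g(Y_i)-d+r\big)$, so the whole statement reduces to a lower bound on the adjusted Brill--Noether number of $L_{Y_i}$ at $p_i$ in terms of its first ramification index.

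To prove that bound I would use genericity of $(Y_i,p_i)$. Writing $\beta_j:=\alpha_j^{L_{Y_i}}(p_i)+g(Y_i)-d+r$, the existence of the $Y_i$-aspect $L_{Y_i}$ with ramification sequence $\alpha^{L_{Y_i}}(p_i)$ on the general pointed curve $(Y_i,p_i)$ forces, via Theorem \ref{1987-2}, the inequality $\sum_{j=0}^r(\beta_j)_+\leq g(Y_i)$. On the other hand, expanding \eqref{adbn} (as recorded in Remark \ref{dimpropergen}) gives $\rho(L_{Y_i},p_i)=g(Y_i)-\sum_{j=0}^r\beta_j$. Since $\beta_j\leq(\beta_j)_+$ for every $j$, dropping the $j=0$ term yields
\[
\sum_{j=1}^r\beta_j\ \leq\ \sum_{j=1}^r(\beta_j)_+\ \leq\ \sum_{j=0}^r(\beta_j)_+\ \leq\ g(Y_i),
\]
and hence $\rho(L_{Y_i},p_i)=g(Y_i)-\beta_0-\sum_{j=1}^r\beta_j\geq-\beta_0$, which is precisely the required lower bound. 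Tracing the chain of equivalences back then delivers (1).

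The step I expect to carry the real content is the passage from Theorem \ref{1987-2} to the bound on $\rho(L_{Y_i},p_i)$. The subtlety is that the adjusted Brill--Noether number $\rho(L_{Y_i},p_i)$ can be negative, so one cannot simply invoke nonnegativity of an expected dimension; the Eisenbud--Harris criterion controls only the truncated sum $\sum_j(\beta_j)_+$. The device that makes everything work is isolating the index $j=0$ (the index matching $a_r^{L_{Y_{i+1}}}(p_i)$ through $\eta_{i0}$) and bounding only the remaining terms $\beta_1,\dots,\beta_r$ by their positive parts. For (2) the same argument applies verbatim after exchanging $Y_i\leftrightarrow Y_{i+1}$ and reading off $a_r^{L_{Y_i}}(p_i)=d-a_0^{L_{Y_{i+1}}}(p_i)+\eta_{ir}$ from $\eta_{ir}$, so that genericity of $(Y_{i+1},p_i)$ becomes the hypothesis used.
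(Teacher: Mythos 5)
Your proposal is correct and takes essentially the same approach as the paper: both reduce (1) via $\eta_{i0}$ to the intrinsic bound $\rho(L_{Y_i},p_i)\geq -\bigl(\alpha_0^{L_{Y_i}}(p_i)+g(Y_i)-d+r\bigr)$ and derive it from Theorem \ref{1987-2} by bounding the terms with $j\geq 1$ by their positive parts, the only difference being that the paper argues by contradiction while you argue directly. Part (2) is obtained by the same symmetry in both treatments.
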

\begin{proof} (1)
 Since $a_0 ^{L_{Y_i}}(p_i)+ a _{r} ^{L_{Y_{i+1}}}( p_i) =d +\eta _{i0}$, the conclusion  $a_r ^{L_{Y_{i+1}}}(p_i) \leq g(Y_i) +r +\rho ( L_{Y_i}, p_i ) +\eta _{i0}$ is equivalent to  $g(Y_i) -d +r+\alpha _0^{L_{Y_i}}( p_i) \geq  -\rho ( L_{Y_i}, p_i )$.
Assume that $g(Y_i) -d +r+\alpha _0^{L_{Y_i}}( p_i) <  -\rho ( L_{Y_i}, p_i )$. Then   the equation  $\rho ( L_{Y_i}, p_i ) = g(Y_i) - \sum _{j=o} ^r (g(Y_i)-d +r+\alpha _j^{L_{Y_i}}( p_i))$ yields  $\sum _{j=1} ^r (g(Y_i) -d+r+\alpha _j^{L_{Y_i}}( p_i)) >g(Y_i),$ which means $\sum _{j=o} ^r (g(Y_i) -d +r+\alpha _j^{L_{Y_i}}( p_i)) _+ >g(Y_i).$ This cannot occur by Theorem \ref{1987-2} since $(Y_i , p_i)$ is general.

(2) This can be shown by the  same arguments as in the proof of (1).
\end{proof}

\begin{Lemma}\label{inequal}
Let $C$ be a TCBE$(g_1, g_2; 2, t)$ curve as in Figure \ref{figure2}.
 Assume that $C$ admits a limit linear series $g^r_d := \{ L_{Y_1},  L_{E_1}, L_{E_2}, L_{Y_2} \}$ with $\rho ( L_{Y_i}, p_i )=:\gamma _i$ for $i=1,2$.  Let  $ \eta _{ij}:= a_j ^{L_{Y_i}}( p_i) +a_{r-j} ^{L_{E_i}}( p_i) -d$, $\beta _{j} := a_j ^{L_{E_1}}( q) +a_{r-j} ^{L_{E_2}}( q) -d$, $\nu _{ij} :=d-(a_j^{L_{E_i}} (p_i) + a_{r-j}^{L_{E_i}}(q)) $ and  $m _i:=-\rho (L_{E_i}, p_i, q) +\sum _{j=o} ^r (\nu _{ij} -1)_{+}$
for $i=1,2$ and $j=0,\cdots, r$.  
\begin{enumerate}
  \item  If $m_1\ge 1$, then 
\begin{equation}\label{first}
\frac{d+m _1 t}{2} + \lceil \frac{\nu _{1r}}{\nu _{1r} +1}\rceil  \leq g_1 +r +\gamma _1 +\eta_{10},
\end{equation}
or
\begin{equation}\label{second}
\frac{d+m _1 t}{2} -\beta _r- \nu _{2r}  <  g_2 +r +\gamma _2 +\eta_{2r}.
\end{equation}
  \item  If $m_2\ge 1$, then 
\begin{equation*}
\frac{d+m _2  t}{2}  \leq g_2 +r +\gamma _2 +\eta_{2r},
\end{equation*}
or
\begin{equation*}
\frac{d+m _2 t}{2} -\beta _0- \nu _{1r} +\lceil \frac{\nu _{2r}}{\nu _{2r} +1}\rceil   <  g_1 +r +\gamma _1 +\eta_{10}.
\end{equation*}  
\end{enumerate}
\end{Lemma}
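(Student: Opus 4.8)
The plan is to control the two ``top'' vanishing orders $a_r^{L_{E_1}}(p_1)$ and $a_r^{L_{E_1}}(q)$ of the middle component $E_1$ by a group-theoretic (torsion) argument, and then to show that a large value of either of them is absorbed by the adjusted Brill--Noether budget of the nearest general component. First I would extract the combinatorial content of Lemma \ref{zeros}: applied to $L_{E_1}$ it says that exactly $m_1+1$ indices $j$ satisfy $\nu_{1j}=0$, and for each such $j$ the section realizing the vanishing gives $L_{E_1}\cong \mathcal O_{E_1}\!\big(a_j^{L_{E_1}}(p_1)\,p_1+a_{r-j}^{L_{E_1}}(q)\,q\big)$, a divisor of degree $d$. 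Comparing two such isomorphisms and using $a_j^{L_{E_1}}(p_1)+a_{r-j}^{L_{E_1}}(q)=d$ yields $\big(a_j^{L_{E_1}}(p_1)-a_{j'}^{L_{E_1}}(p_1)\big)(p_1-q)\sim 0$ on $E_1$; since $t$ is the order of $\mathcal O_{E_1}(p_1-q)$ by condition $(ii)$, the $m_1+1$ values $a_j^{L_{E_1}}(p_1)$ (and dually the values $a_{r-j}^{L_{E_1}}(q)$) are distinct and mutually congruent modulo $t$.

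Consequently the extreme indices $j_*=\max\{j:\nu_{1j}=0\}$ and $j^*=\min\{j:\nu_{1j}=0\}$ satisfy $a_{j_*}^{L_{E_1}}(p_1)-a_{j^*}^{L_{E_1}}(p_1)\ge m_1 t$. Since $a_r^{L_{E_1}}(p_1)\ge a_{j_*}^{L_{E_1}}(p_1)$, with a strict gain of at least $1$ exactly when $\nu_{1r}\ge 1$ (equivalently $j_*<r$), and $a_r^{L_{E_1}}(q)\ge a_{r-j^*}^{L_{E_1}}(q)=d-a_{j^*}^{L_{E_1}}(p_1)$, I would add these to obtain the key estimate
\begin{equation*}
a_r^{L_{E_1}}(p_1)+a_r^{L_{E_1}}(q)\ \ge\ d+m_1 t+\Big\lceil \tfrac{\nu_{1r}}{\nu_{1r}+1}\Big\rceil .
\end{equation*}
This is the single source of the term $\tfrac{d+m_1t}{2}$ and of the ceiling correction in part $(1)$.

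To produce the stated alternative I would suppose \eqref{first} fails and deduce \eqref{second}. If \eqref{first} fails, then since $(Y_1,p_1)$ is general Lemma \ref{additive}$(1)$ gives $a_r^{L_{E_1}}(p_1)\le g_1+r+\gamma_1+\eta_{10}<\tfrac{d+m_1t}{2}+\lceil \nu_{1r}/(\nu_{1r}+1)\rceil$, so the displayed estimate forces $a_r^{L_{E_1}}(q)>\tfrac{d+m_1t}{2}$. Rewriting the node relation at $q$, namely $-\beta_r-\nu_{2r}=a_r^{L_{E_2}}(p_2)-a_r^{L_{E_1}}(q)$ (from the definitions of $\beta_r$ and $\nu_{2r}$), the left side of \eqref{second} equals $\tfrac{d+m_1t}{2}-a_r^{L_{E_1}}(q)+a_r^{L_{E_2}}(p_2)$, which is strictly below $a_r^{L_{E_2}}(p_2)$. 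It then remains to bound $a_r^{L_{E_2}}(p_2)$ by $g_2+r+\gamma_2+\eta_{2r}$; here I would feed the compatibility inequalities $\eta_{2j}\ge 0$ at $p_2$ and the full generality of $(Y_2,p_2)$ (Theorem \ref{1987-2} together with Lemma \ref{additive} applied to the $E_2$--$Y_2$ bridge) into the adjusted Brill--Noether number $\gamma_2$. Part $(2)$ follows by the symmetry $p_1\leftrightarrow p_2$, $E_1\leftrightarrow E_2$, $j\leftrightarrow r-j$, which exchanges \eqref{first} with the first inequality of $(2)$ and accounts for the migration of the ceiling term to the second inequality.

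The main obstacle is precisely this last step of the escape case: transporting the ``leaked'' vanishing across the node $q$ and the second elliptic bridge and showing it is absorbed by the far component $Y_2$ with exactly the correction $\eta_{2r}$. The torsion estimate only yields $a_r^{L_{E_1}}(q)>\tfrac{d+m_1t}{2}$, which bounds the left side of \eqref{second} by $a_r^{L_{E_2}}(p_2)$ but not tautologically by the target; the comparison of the vanishing spreads of $L_{E_2}$ and $L_{Y_2}$ at $p_2$ must be resolved using the generality of $(Y_2,p_2)$ rather than the compatibility inequalities alone, and the strictness together with the placement of the ceiling requires careful bookkeeping.
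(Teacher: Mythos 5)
Your proposal is correct and follows essentially the same route as the paper: Lemma \ref{zeros} to produce the $m_1+1$ indices with $\nu_{1j}=0$, the torsion argument forcing a spread of at least $m_1t$ among the corresponding vanishing orders, a dichotomy over which endpoint of $E_1$ absorbs the large vanishing order, transport across $q$ via $\beta_r$ and $\nu_{2r}$, and Lemma \ref{additive} (generality of $(Y_i,p_i)$) to cap $a_r^{L_{E_1}}(p_1)$ and $a_r^{L_{E_2}}(p_2)$. Your only deviation is cosmetic — packaging the dichotomy as the single summed estimate $a_r^{L_{E_1}}(p_1)+a_r^{L_{E_1}}(q)\ge d+m_1t+\lceil\nu_{1r}/(\nu_{1r}+1)\rceil$ instead of the paper's explicit case split on $a_{j(1,0)}^{L_{E_1}}(p_1)$ versus $a_{r-j(1,m_1)}^{L_{E_1}}(q)$ — and the step you flag as the main obstacle is exactly what Lemma \ref{additive}(2) supplies, so there is no gap.
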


\begin{proof}
(1) By Lemma \ref{zeros}, we have $\#  \{j \ | \ \nu_{1j} =0, j=0, \cdots, r \} =m_1+1$. Hence, there is a strictly increasing sequence 
 $\{ j(1,0), \cdots , j(1,m_1) \} \subset \{ 0, \cdots, r\}$ such that for each $j(1,l)$  there  is a
$\sigma _{j(1,l)} $ of $L_{E_1}$ satisfying
\begin{equation*}\mbox{div}(\sigma _{j(1,l)})
=a_{j(1,l)} ^{L_{E_1}} (p_1) p_1+ a_{r- j(1,l) }^{L_{E_1}} (q)q.
\end{equation*}
We will derive equations \eqref{first} and \eqref{second} through comparing   the integers $a_{j(1,0)} ^{L_{E_1}} (p_1)$ and $a_{r-j(1,m_1)} ^{L_{E_1}} (q)$.
First we assume that $a_{j(1,0)} ^{L_{E_1}} (p_1) \geq  a_{r- j(1,m _1) }^{L_{E_1}} (q) $. Using Lemma \ref{additive}, (2), we have 
\[a_{r} ^{L_{E_1}} (p_1) \leq g_1 +r +\gamma _1 +\eta_{10}. \] 
Thus the inequality \eqref{first} can be 
  given when we verify $ a_{r} ^{L_{E_1}} (p_1) \geq \frac{d+m _1 t}{2} + \lceil \frac{\nu _{1r}}{\nu _{1r} +1}\rceil  $.
Since   $$a_{j(1,l)} ^{L_{E_1}} (p_1)p_1 + a_{r- j(1,l) }^{L_{E_1}} (q)q \sim a_{j(1,0)} ^{L_{E_1}} (p_1)p_1 + a_{r- j(1,0) }^{L_{E_1}} (q)q$$ and $m _1  +1 = \#  \{j \ | \ \nu_{1j} =0, j=0, \cdots, r \} \geq 2$, we get  $a_l(p_1-q)\sim 0$ for all $1\le l \le m _1$,
where
$$a_l :=a_{j(1,l)} ^{L_{E_1}} (p_1) - a_{j(1,0)} ^{L_{E_1}} (p_1)=  a_{r- j(1,0) }^{L_{E_1}} (q) - a_{r- j(1,l) }^{L_{E_1}} (q)  \  \mbox{ for }  l=1, \cdots, m _1.$$
This forces  that every $a_l$ is a multiple of the torsion $t$ and hence
$$ a_{m _1} =n_1 t  \  \mbox{ for some integer } n_1\geq m _1, $$
since $a_1<a_2<\cdots<a_{m _1}.$
This yields that
\begin{eqnarray}\label{eq}
\notag  a_{j(1,0)} ^{L_{E_1}} (p_1) +  a_{r- j(1,m _1 ) }^{L_{E_1}} (q) &=& a_{j(1,0)} ^{L_{E_1}} (p_1) + a_{r- j(1,0 ) }^{L_{E_1}} (q)-(a_{r- j(1,0 ) }^{L_{E_1}} (q) -a_{r- j(1,m_1 ) }^{L_{E_1}} (q))\\
  &=& d-n_1 t  ,\ \
\end{eqnarray}
whence
$$ a_{r- j(1,m _1 ) }^{L_{E_1}} (q)\leq \frac{d-n_1 t}{2}$$
by the assumption  $a_{j(1,0)} ^{L_{E_1}} (p_1) \geq  a_{r- j(1,m_1) }^{L_{E_1}} (q) $.
Therefore,
\begin{eqnarray*}
a_{r} ^{L_{E_1}} (p_1) &\geq& a_{j(1,m _1)} ^{L_{E_1}} (p_1) +  \lceil \frac{\nu _{1r}}{\nu _{1r} +1}\rceil \\
& =&  d- a_{r- j(1,m _1 ) }^{L_{E_1}} (q) + \lceil \frac{\nu _{1r}}{\nu _{1r} +1}\rceil \\
&\geq& \frac{d +n_1t}{2} +  \lceil \frac{\nu _{1r}}{\nu _{1r} +1}\rceil  ,
\end{eqnarray*}
since $\nu_{1r} =0$ if and only if ${j(1,m _1)} =r$.
This implies the inequality \eqref{first}  by  $n_1 \geq m _1$.

Now we assume that $a_{j(1,0)} ^{L_{E_1}} (p_1) <  a_{r- j(1,m _1) }^{L_{E_1}} (q) $.
The equation \eqref{eq} yields
$$a_{r- j(1,m _1 ) }^{L_{E_1}} (q)  > \frac{d -n_1t}{2} .  $$
 Therefore we have
\begin{equation*}
a_{r }^{L_{E_1}} (q) \geq a_{r- j(1,0 ) }^{L_{E_1}} (q) = a_{r- j(1,m _1 ) }^{L_{E_1}} (q) +n_1 t  > \frac{d +n_1 t}{2} ,
\end{equation*}
whence
\begin{eqnarray*}
a_{r }^{L_{E_2}} (p_2) &=&d- \nu _{2r} - a_{0} ^{L_{E_2}} (q)\\
&=&- \nu _{2r} -\beta _r +a_{r }^{L_{E_1}} (q) \ \ \mbox{ by  }\beta _r =a_{r }^{L_{E_1}} (q)+a_{0} ^{L_{E_2}} (q) -d\\
&>& \frac{d +m_1 t}{2}    - \nu _{2r} -\beta _r , \  \mbox{ for } n_1 \geq m_1\ .
\end{eqnarray*}
Thus the result $(1)$ is proved by Lemma \ref{additive}.

(2) By Lemma \ref{zeros},  we have a strictly increasing sequence 
 $\{ j(2,0), \cdots , j(2,m_2) \} \subset \{ 0, \cdots, r\}$ such that for each $j(2,l)$  there  is a
$\sigma _{j(2,l)} $ of $L_{E_2}$ satisfying
\begin{equation*}\mbox{div}(\sigma _{j(2,l)})
=a_{j(2,l)} ^{L_{E_2}} (p_2) p_2+ a_{r- j(2,l) }^{L_{E_2}} (q)q.
\end{equation*}
By the same reasoning as in the proof  of (1),  we get 
\begin{equation*}
a_{j(2,0)} ^{L_{E_2}} (p_2) +  a_{r- j(2,m _2 ) }^{L_{E_2}} (q) = d-n_2 t  {\mbox{ for some $n_2 \geq  m_2$}}.
\end{equation*}
 If $a_{j(2,0)} ^{L_{E_2}} (p_2) \geq  a_{r- j(2,m _2) }^{L_{E_2}} (q) -2 \lceil \frac{\nu _{2r}}{\nu _{2r} +1}\rceil $,  then we  obtain 
$ a_{r- j(2,m _2) }^{L_{E_2}} (q)\leq \frac{d-n_2 t}{2}+\lceil \frac{\nu _{2r}}{\nu _{2r} +1}\rceil  .$
This yields $$a_r^{L_{E_2}} (p_2) \geq \frac{d+m_2t}{2}.$$
If $a_{j(2,0)} ^{L_{E_2}} (p_2) <  a_{r- j(2,m _2) }^{L_{E_2}} (q) -2 \lceil \frac{\nu _{2r}}{\nu _{2r} +1}\rceil$, we have $ a_{r- j(2,m _2) }^{L_{E_2}} (q) 
>\frac{d-n_2 t}{2}+  \lceil \frac{\nu _{2r}}{\nu _{2r} +1}\rceil$ and hence $$a_{r }^{L_{E_2}} (q) \geq a_{r- j(2,0 ) }^{L_{E_2}} (q) = a_{r- j(2,m _2 ) }^{L_{E_2}} (q) +n_2 t  > \frac{d +n_2 t}{2}+  \lceil \frac{\nu _{2r}}{\nu _{2r} +1}\rceil .$$
This combined with the equalities $a_r^{L_{E_1}} (p_1) = d- \nu_{1r} - a_0^{L_{E_1}} (q) = -\nu_{1r} -\beta _0 +  a_r^{L_{E_2}} (q) $ yields that  $a_r^{L_{E_1}} (p_1) > \frac{d+m _2 t}{2} -\beta _0- \nu _{1r} +\lceil \frac{\nu _{2r}}{\nu _{2r} +1}\rceil  .$
Therefore by Lemma \ref{additive}  we obtain the equations:
\begin{equation*}
\begin{cases}
\frac{d+m _2  t}{2}  \leq g_2 +r +\gamma _2 +\eta_{2r} \ \ \ \ \ \ \ \ \ \ \ \ \ \ \ \mbox{ if } a_{j(2,0)} ^{L_{E_2}} (p_2) \geq  a_{r- j(2,m _2) }^{L_{E_2}} (q) -2 \lceil \frac{\nu _{2r}}{\nu _{2r} +1}\rceil  \\
\frac{d+m _2 t}{2} -\beta _0- \nu _{1r} +\lceil \frac{\nu _{2r}}{\nu _{2r} +1}\rceil   < g_1 +r +\gamma _1 +\eta_{10}\  \  \mbox{ if } a_{j(2,0)} ^{L_{E_2}} (p_2) <  a_{r- j(2,m _2) }^{L_{E_2}} (q) -2 \lceil \frac{\nu _{2r}}{\nu _{2r} +1}\rceil  ,
\end{cases}
\end{equation*}
which gives the result (2). Thus the proof of the lemma is completed.
\end{proof}

From Lemma  \ref{inequal} we obtain necessary conditions on $t$  for a TCBE$(g_1, g_2; 2,t)$  curve to  carry a limit linear series $g^r_d$.
\begin{Theorem}\label{mathm}
Let $C$ be a TCBE$(g_1, g_2; 2,t)$  curve of genus $g$ with   $g_1 \geq g_2$ and $t\geq 4$.
If $C$  admits   a limit linear series $g^r_d$ with $\rho (g,r,d)=\rho  <0$, then
\begin{eqnarray}   \label{mathm2} t  < \begin{cases}   g-d+2r  +(g_1 -g_2) +\delta_{g_1, g_2} &\mbox{  in  case  }\rho = -1, \\
\frac{2}{-\rho }\big( \, g-d+2r -2 +(g_1 -g_2) +\delta_{g_1, g_2}\big)
&\mbox{ in  case } \rho \leq -2.
\end{cases}
\end{eqnarray}
\end{Theorem}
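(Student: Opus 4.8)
The plan is to convert the existence of the limit $g^r_d$ into a lower bound on the torsion multiplicities $m_1,m_2$ appearing in Lemma~\ref{inequal}, and then read off an upper bound on $t$ from the inequalities \eqref{first}--\eqref{second}. First I would apply Lemma~\ref{additive0} to the length-four chain $Y_1\cup E_1\cup E_2\cup Y_2$, reading the three node discrepancies as $\eta_{1j}$ at $p_1$, $\beta_j$ at $q$ and $\eta_{2j}$ at $p_2$. Writing $\gamma_i=\rho(L_{Y_i},p_i)$, $s_i=\sum_j(\nu_{ij}-1)_+\ge0$ and $N=\sum_j(\eta_{1j}+\beta_j+\eta_{2j})\ge0$, Lemma~\ref{zeros} gives $\rho(L_{E_i},p_i,q)=s_i-m_i$, so the additivity identity becomes
\[
\gamma_1+\gamma_2+s_1+s_2-m_1-m_2+N=\rho .
\]
The only input from genericity is $\gamma_i\ge0$, which follows from Theorem~\ref{1987-2}: by Remark~\ref{dimpropergen}, $\gamma_i=g_i-\sum_j(\alpha^{L_{Y_i}}_j(p_i)+g_i-d+r)\ge g_i-\sum_j(\alpha^{L_{Y_i}}_j(p_i)+g_i-d+r)_+\ge0$. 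Hence $m_1+m_2=\gamma_1+\gamma_2+s_1+s_2+N-\rho\ge-\rho>0$, so at least one $m_i\ge1$ (and $m_1+m_2\ge2$ when $\rho\le-2$). I would also record the identity $g_1+g_2=g-2$ and the budget $\gamma_1+\gamma_2+N\le m_1+m_2+\rho$, which will be spent against the right-hand sides of Lemma~\ref{inequal}.

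Next I would feed each index $i$ with $m_i\ge1$ into Lemma~\ref{inequal}, each of whose conclusions bounds $m_it$, up to the $\beta$-, $\nu$- and ceiling-corrections, by $2(g_\ast+r+\gamma_\ast+\eta_\ast)-d$ with $\ast\in\{1,2\}$, and then sum the two resulting inequalities (or use the single available one when one $m_i=0$). In the \emph{local} configuration, where \eqref{first} and the first option of Lemma~\ref{inequal}(2) hold, the summed genus terms are $g_1+g_2+2r+(\gamma_1+\gamma_2+\eta_{10}+\eta_{2r})\le(g-2)+2r+(m_1+m_2+\rho)$; after moving $d+\tfrac{(m_1+m_2)t}{2}$ to the left this reads $(m_1+m_2)\tfrac{t-2}{2}\le g-d+2r-2+\rho$, and dividing by $m_1+m_2\ge-\rho$ (with $t\ge4$) already gives the sharper bound $t\le\tfrac{2}{-\rho}(g-d+2r-2)$. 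For $\rho=-1$ the count forces $m_1+m_2=1$, so all of $\gamma_i,s_i,N$ vanish and a single inequality with $m_it=t$ is in play; the extremal case is the cross alternative that controls the jump by the \emph{larger} curve $Y_1$ and uses $\nu_{\ast r}\le1$, giving $t<2g_1+2r+2-d$, which rewrites through $g=g_1+g_2+2$ as $t<g-d+2r+(g_1-g_2)+\delta_{g_1,g_2}$. These cases show the asserted bound and explain why it carries the term $(g_1-g_2)$: it records the cost of landing a jump on $Y_1$ rather than on $Y_2$.

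The hard part will be the \emph{cross} configuration for $\rho\le-2$, where \eqref{second} and/or the second option of Lemma~\ref{inequal}(2) replace the local bounds. There the genus $g_2$ of the smaller curve appears in place of $g_1$, dressed with $\beta_0,\beta_r,\nu_{1r},\nu_{2r}$ and the ceilings, and one must convert this into a clean $g_1$-side estimate. The lever I would use is the exact cancellation
\[
\beta_0+\beta_r+\nu_{1r}+\nu_{2r}=\bigl(a_r^{L_{E_1}}(q)-a_r^{L_{E_1}}(p_1)\bigr)+\bigl(a_r^{L_{E_2}}(q)-a_r^{L_{E_2}}(p_2)\bigr),
\]
which isolates the excess of the top vanishing at the interior node $q$ over that at $p_1,p_2$. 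Because $q$ lies between the two non-general elliptic curves there is no genericity bound available at $q$, so the crux is to bound $a_r^{L_{E_i}}(q)$ sharply, using the compatibility $\beta_j\ge0$ at $q$ together with the genericity bounds $a_r^{L_{E_i}}(p_i)\le g_i+r+\gamma_i+\eta_{i\ast}$ of Lemma~\ref{additive} at $p_1,p_2$; it is precisely this step that must absorb the excess into $(g_1-g_2)$ and, in the symmetric case $g_1=g_2$, the extra unit $\delta_{g_1,g_2}$.

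Once that excess is controlled, the endgame mirrors the local case: the summed inequality takes the form $(m_1+m_2)\tfrac{t-2}{2}\le g-d+2r-2+(g_1-g_2)+\delta_{g_1,g_2}+\rho$, and dividing by $m_1+m_2\ge-\rho$ (with $t\ge4$ needed to keep the torsion-jump and ceiling corrections nonnegative) delivers exactly \eqref{mathm2} for $\rho\le-2$. I expect the main obstacle to be entirely in the cross configuration — in particular in ruling out, or separately estimating, the ``doubled-$\gamma$'' combinations (\eqref{first} with the second option of Lemma~\ref{inequal}(2), and \eqref{second} with the first option), where the budget $\gamma_1+\gamma_2\le m_1+m_2+\rho$ does not directly cover a term $2\gamma_i$ and one is forced back onto the sharp control of $a_r^{L_{E_i}}(q)$ above.
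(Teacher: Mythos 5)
Your setup coincides with the paper's: the identity $m_1+m_2=\gamma_1+\gamma_2+s_1+s_2+N-\rho$ obtained from Lemmas \ref{additive0} and \ref{zeros}, the genericity input $\gamma_i\ge 0$ via Theorem \ref{1987-2}, and the plan of feeding each $m_i\ge 1$ into Lemma \ref{inequal} and summing. Your treatment of the local configuration (both ``untwisted'' alternatives) is correct and is essentially the paper's first line of its (Case 2) display. But there are two genuine gaps. First, for $\rho=-1$ the count does \emph{not} force $m_1+m_2=1$: the identity only gives $m_1+m_2=1+(\gamma_1+\gamma_2+s_1+s_2+N)\ge 1$, and any crudeness of the limit series (some $\eta_{ij}>0$ or $\beta_j>0$) or a positive $\gamma_i$ pushes $m_1+m_2$ above $1$, so you cannot assume the corrections vanish and reduce to a single clean inequality; the paper instead keeps the full case analysis and uses that $-1+\gamma_1+\gamma_2+s_1+s_2+N=m_1+m_2-2\ge 0$ when both $m_i\ge1$.

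Second, and more seriously, the cross configuration for $\rho\le-2$ --- which you correctly identify as the crux --- is left unresolved, and the mechanism you propose (a sharp bound on $a_r^{L_{E_i}}(q)$ via the cancellation $\beta_0+\beta_r+\nu_{1r}+\nu_{2r}=\sum_i\bigl(a_r^{L_{E_i}}(q)-a_r^{L_{E_i}}(p_i)\bigr)$) is not the one that closes the argument: as you yourself note, there is no genericity available at $q$, and nothing bounds $a_r^{L_{E_i}}(q)$ below $d$ a priori. What the paper actually does is sum the two chosen inequalities so that the right-hand side carries the doubled corrections (e.g.\ $2\gamma_1+2\eta_{10}+\beta_0+\nu_{1r}$ for the pairing \eqref{*1} with \eqref{**2}), and then absorbs them by multiplying the identity \eqref{zeros1} by $t/2$: since $t\ge 4$ one has $\frac{(m_1+m_2)t}{2}\ge\frac{-\rho t}{2}+2\bigl(\gamma_1+\gamma_2+N+s_1+s_2\bigr)$, and the coefficient $2$ on each singly-counted correction dominates the doubled ones up to the elementary estimates $(\nu-1)_+-\nu\ge-1$ and $(\nu-1)_+-\nu+\lceil\nu/(\nu+1)\rceil\ge 0$. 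This absorption --- not any control of $a_r^{L_{E_i}}(q)$ --- is where the hypothesis $t\ge4$ enters and where the terms $(g_1-g_2)$ and $\delta_{g_1,g_2}$ arise, via $2g_1\pm\tfrac12=g-2+(g_1-g_2)\pm\tfrac12$ in the two asymmetric pairings. Without supplying this step (or a working substitute), your argument does not close in precisely the cases you flag as the main obstacle.
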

\begin{proof}
Assume that $C$ admits a   limit  $g^r_d=\{ N_{Y_1}, N_{E_1}, N_{E_2}, N_{Y_2} \}$ with    $\rho (g, r, d)=\rho $.  Let $\eta _{ij}, \  \beta _{j}, \  \nu _{ij}, \ \gamma _i$ and $m_i$ be the same  as in  Lemma \ref{inequal}.
Since   $(Y_i,p_i)$ is general,  we have $\gamma _i\geq 0$
 for each $ i=1,2.$
  And  Lemma \ref{additive0} tells
\begin{equation*} 
- \rho(L_{E_1}, p_1, q)- \rho(L_{E_2}, p_2, q) = -\rho  +\sum_{i=1}^{2} \sum_{j=0}^{r}  \eta _{ij} + \sum_{j=0}^{r}\beta_j +\gamma_1 +\gamma _2,
\end{equation*}
whence 
\begin{equation} \label{zeros1}
 m _1 +m _2 =  -\rho  +\sum_{i=1}^{2} \sum_{j=0}^{r}  \eta _{ij} + \sum_{j=0}^{r}\beta_j +\gamma_1 +\gamma _2 +\sum_{i=1}^{2}\sum _{j=o} ^r (\nu _{ij} -1)_{+} .
\end{equation}

The remaining parts of the proof will be split into the following two cases:
\begin{equation*}
 (\mbox{Case }1)\  m _1 \leq 0 \ \mbox{ or }\ m _2\leq 0, \ \ \
 (\mbox{Case }2) \ m _1\geq 1 \mbox{ and } m _2\geq 1.
\end{equation*}

\vspace{0.5cm}
{\bf\noindent{(Case 1)}} First we assume $ m _2 \leq 0$.  The equation (\ref{zeros1}) gives
\begin{equation}\label{Delta} m_1\geq -\rho +\sum_{i=1}^{2} \sum_{j=0}^{r}  \eta _{ij} + \sum_{j=0}^{r}\beta_j +\gamma_1 +\gamma _2  +\sum_{i=1}^{2}\sum _{j=o} ^r (\nu _{ij} -1)_{+},
\end{equation}
which combined with  the hypothesis $\rho < 0$ means $m_1 \geq 1$. By 
 Lemma \ref{inequal},(1),   it follows that  either
$$\frac{d+m _1 t}{2} \leq  g_1 +r +\gamma _1 +\eta_{10}$$
or
$$\frac{d+m _1 t}{2} -\beta _r - \nu _{2r}< g_2 +r +\gamma _2 +\eta_{2r}.$$
These combined with  \eqref{Delta} and  the hypothesis $t\geq 4$ yield that either 
\begin{eqnarray*}
2g_1 -d +2r 
&\geq& m _1 t -2(\gamma _1 +\eta_{10} )\\
&\geq& -\rho t + 4(  \gamma _1 +\eta_{10} ) - 2(\gamma _1 +\eta_{10} ) \\
&\geq& -\rho t \, ,
\end{eqnarray*}
or 
\begin{eqnarray*}
2g_2 -d +2r &>& m _1 t -2(\beta _r+\nu _{2r} +\gamma _2 +\eta_{2r})\\
& \geq& -\rho t +4(  \eta _{2r} + \beta_r +\gamma _2  +(\nu _{2r} -1)_{+}) -2(\beta _r+\nu _{2r} +\gamma _2 +\eta_{2r})\\
&\geq& -\rho t -2\, ,
\end{eqnarray*}
since $2(\nu _{2r} -1)_{+} -\nu _{2r}\geq -1$.  Therefore we have 
\begin{equation}\label{eq31}
-\rho t \leq \mbox{max} \{ 2g_1 -d+2r, \ 2g_2 -d+2r +1 \}.
\end{equation}

Assume  $m _1 \leq  0$. From  (\ref{zeros1}) we get 
\begin{equation*}m_2\geq -\rho +\sum_{i=1}^{2} \sum_{j=0}^{r}  \eta _{ij} + \sum_{j=0}^{r}\beta_j +\gamma_1 +\gamma _2  +\sum_{i=1}^{2}\sum _{j=o} ^r (\nu _{ij} -1)_{+} \geq 1.
\end{equation*} Using Lemma \ref{inequal},(2) and 
the same arguments as the above, we obtain that  
\begin{equation}\label{eq32}
    \mbox{  either }\  -\rho t  \leq 2g_2 -d+2r  \  \mbox{  or } \ -\rho t -2 < 2g_1 -d+2r .
  \end{equation}
By \eqref{eq31} and \eqref{eq32} we conclude that if either $ \ m _1 \leq 0  \mbox{ or  } m _2\leq 0,$
\begin{eqnarray}
 \notag t  &\leq&  \frac{1}{-\rho} \big(\, 2g_1 -d+2r  +1 \big) \\
\label{maineq1} &=&\frac{1}{-\rho} \big(  g-d+2r -1+(g_1-g_2)  \big),
\end{eqnarray}
since $2g_1 =g_1 +g_2 +(g_1 -g_2)=g-2 +(g_1 -g_2)$. Thus $t$ satisfies the equation \eqref{mathm2}.   

\vspace{0.5cm}
{\bf\noindent{(Case 2)}} Assume $ m _1\geq 1$ and $ m _2\geq 1$. By Lemma \ref{inequal}, the condition $m_1\geq 1$ implies that 
\begin{equation}\label{*1}\tag{*1}
\frac{d+m _1 t}{2} + \lceil \frac{\nu _{1r}}{\nu _{1r} +1}\rceil\leq g_1 +r +\gamma _1 +\eta_{10}.
\end{equation}
or
\begin{equation}\label{*2}\tag{*2}
\frac{d+m _1 t}{2} -\beta _r- \nu _{2r} +\frac{1}{2} \leq g_2 +r +\gamma _2 +\eta_{2r}.
\end{equation}
and the condition $m_2\geq 1$ implies that either
\begin{equation}\label{**1}\tag{**1}
\frac{d+m _2 t}{2} \leq g_2 +r +\gamma _2 +\eta_{2r}.
\end{equation}
or
\begin{equation}\label{**2}\tag{**2}
\frac{d+m _2 t}{2} -\beta _0- \nu _{1r}+ \lceil \frac{\nu _{2r}}{\nu _{2r} +1}\rceil+\frac{1}{2} \leq g_1 +r +\gamma _1 +\eta_{10}.
\end{equation}
Therefore  we should have one of the following cases:
\begin{eqnarray*}
\begin{cases} g-2 -d+2r \geq \frac{(m _1 +m _2)t}{2} -(\gamma _1 +\eta_{10}+ \gamma _2 +\eta_{2r})
  &\mbox{for } \{ (\ref{*1}),  (\ref{**1}) \}\\
2g_1 -d+2r - \frac{1}{2} \geq \frac{(m _1 +m _2)t}{2} -(2\gamma _1 +2\eta_{10}+ \beta_0 + \nu _{1r}) + \lceil \frac{\nu _{1r}}{\nu _{1r} +1}\rceil
&\mbox{for }  \{(\ref{*1}), (\ref{**2})\} \\
2g_2 -d+2r + \frac{1}{2} \geq \frac{(m _1 +m _2)t}{2} -(2\gamma _2 +2\eta_{2r}+ \beta_r + \nu _{2r})+1
&\mbox{for }  \{(\ref{*2}), (\ref{**1})\} \\
g -2-d+2r \\
~~\geq \frac{(m _1 +m _2)t}{2}
 -(\gamma _1 +\eta_{10}+ \gamma _2 +\eta_{2r}+\beta_0 + \nu _{1r} +\beta_r + \nu _{2r}) + \lceil \frac{\nu _{2r}}{\nu _{2r} +1}\rceil +1
&\mbox{for } \{ (\ref{*2}), (\ref{**2})\}
\end{cases}
\end{eqnarray*}
since $g_1 +g_2 =g-2$.

Since the sum of left hand sides of any  $\{(*k),(**l)\}$ have the term $\frac{(m _1 +m _2)t}{2}$, we consider the following inequalities given by (\ref{zeros1}):
\begin{eqnarray} \label{eq00}
&&\frac{(m _1 +m _2)t}{2} \\
\notag &&\geq  \begin{cases}
\frac{ -\rho t}{2} + \frac{ t}{2}(\sum_{i=1}^{2} \sum_{j=0}^{r}  \eta _{ij} + \sum_{j=0}^{r}\beta_j +\gamma_1 +\gamma _2
 +\sum_{i=1}^{2}\sum _{j=o} ^r (\nu _{ij} -1)_{+})  &\mbox{if } \rho\leq -2 \\
t +   \frac{ t}{2}(-1 + \sum_{i=1}^{2} \sum_{j=0}^{r}  \eta _{ij} + \sum_{j=0}^{r}\beta_j +\gamma_1 +\gamma _2
 +\sum_{i=1}^{2}\sum _{j=o} ^r (\nu _{ij} -1)_{+}) &\mbox{if } \rho =-1,
\end{cases}\\
\notag &&\geq  \begin{cases}
\frac{ -\rho t}{2} + 2(\sum_{i=1}^{2} \sum_{j=0}^{r}  \eta _{ij} + \sum_{j=0}^{r}\beta_j +\gamma_1 +\gamma _2
 +\sum_{i=1}^{2}\sum _{j=o} ^r (\nu _{ij} -1)_{+})  &\mbox{if } \rho\leq -2 \\
t + 2(-1 + \sum_{i=1}^{2} \sum_{j=0}^{r}  \eta _{ij} + \sum_{j=0}^{r}\beta_j +\gamma_1 +\gamma _2
 +\sum_{i=1}^{2}\sum _{j=o} ^r (\nu _{ij} -1)_{+}) &\mbox{if } \rho =-1.
\end{cases}
\end{eqnarray}
Here the last inequality is given by $t\geq 4$ and the inequality $-1+ \sum_{i=1}^{2} \sum_{j=0}^{r}  \eta _{ij} + \sum_{j=0}^{r}\beta_j +\gamma_1 +\gamma _2  +\sum_{i=1}^{2}\sum _{j=o} ^r (\nu _{ij} -1)_{+} \geq 0$ which is derived from  \eqref{zeros1} combined with $m _1 +m _2 \geq 2$ and $\rho =-1$.
Thus the equation \eqref{eq00} yields that for any case  $\{(*k),(**l)\}$
\begin{eqnarray*}
(\mbox{ right hand side of } \{(*k),(**l)\}) \geq \begin{cases} \frac{ -\rho t}{2} &\mbox{ if } \rho\leq -2 \\
t-2  &\mbox{ if } \rho= -1
\end{cases}
\end{eqnarray*}
since $(\nu _{ir}-1)_+ -\nu _{ir} \geq -1$ and $(\nu _{ir}-1)_+ -\nu _{ir} +\lceil \frac{\nu _{ir}}{\nu _{ir} +1}\rceil\geq 0$ for each $i=1,2$.
On the one hand, we have
\begin{eqnarray*}
(\mbox{ left hand side of } \{(*k),(**l) \}) & \leq &
\begin{cases}
2g_1-d+2r+\frac{1}{2}\ \ {\mbox{if $g_1=g_2$}}\\
2g_1-d+2r-\frac{1}{2} \ \ {\mbox{if $g_1>g_2$}}
\end{cases}\\
& =& g-d+2r-\frac{5}{2}+(g_1-g_2)+\delta_{g_1 ,g_2}
\end{eqnarray*}
since $2g_1=g-2+(g_1-g_2)$.   In sum, the conclusion of (Case 2) is  that 
\begin{eqnarray*}   t  \leq \begin{cases}   g-d+2r -\frac{1}{2} +(g_1 -g_2) +\delta_{g_1, g_2} &\mbox{  if  }\rho = -1, \\
\frac{2}{-\rho }\big( \, g-d+2r -\frac{5}{2} +(g_1 -g_2) +\delta_{g_1, g_2}\big)
&\mbox{ if } \rho \leq -2 .
\end{cases}
\end{eqnarray*}
 Therefore we complete the proof of the theorem. 
\end{proof}

Theorem  \ref{mathm} provides  conditions on the torsion $t$ for a TCBE$(g_1, g_2 ; 2,t)$ curve not to carry a  limit $g^r_d$, whereas  Theorem  1.1 in \cite{SK} gives conditions on $t$ for the existence of a smoothable limit $g^r_d$ on the curve. Combining Theorem  1.1 in \cite{SK} and  Theorem \ref{mathm}, we obtain the following corollaries.  

\begin{Corollary}\label{macor}  Let $C$ be a  TCBE$(g_1, g_2; 2,t)$   curve  of genus  $g$ with
 $g_1\geq g_2 $ and let   positive integers $r$ and $d$ satisfy $\rho:=\rho (g, r, d)=-1$ or $-2$.
  \begin{enumerate}
\item $C$ carries a smoothable limit $g^r_d$ if 
\begin{equation}\label{eqexist5}
\begin{cases} t\leq g_2 +3+\rho, \ \ t\equiv g_1 +1 \ (\mbox{mod }2) \ \ \
\mbox{ for } r=1\, \\
  r+3 +(g_1 -g_2) \leq   t\leq g-d+2r+\rho \ \ \ \mbox{ for } r\geq 2  .
\end{cases} 
\end{equation}
\item 
$C$ does not carry a limit $g^{r}_d$ if $t\geq g-d+2r+2(1+\rho)+(g_1-g_2)+\delta_{g_1,g_2}$.
\end{enumerate}
Specifically, in case $\rho=-2$ and $r\geq 2$,  a  TCBE$(\lceil \frac{g-2}{2}\rceil, \lfloor \frac{g-2}{2}\rfloor; 2,t)$ curve with $t\geq r+4$ admits a smoothable limit $g^{r}_d$ if and only if $t\leq g-d+2r-2.$
\end{Corollary}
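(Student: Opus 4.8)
The plan is to read off parts (1) and (2) directly from the two existing results and then intersect their ranges in the balanced case. Throughout I set $h := \rho + 2$, so that $h \in \{0,1\}$ and the standing hypothesis $\rho = -1$ or $-2$ becomes exactly the hypothesis $h = 1$ or $h=0$ of Theorem 1.1 in \cite{SK}.

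For part (1) I would simply translate the bounds of Theorem 1.1 in \cite{SK} under this substitution. Its upper bound $g - d + 2r - 2 + h$ becomes $g - d + 2r + \rho$, and for $r = 1$ its range $2 \le t \le g_2 + h + 1$ together with $t \equiv g_1 + 1 \ (\mathrm{mod}\ 2)$ becomes $t \le g_2 + 3 + \rho$ with the same parity, as recorded in \eqref{eqexist5}. For $r \ge 2$ the only substantive point is to check that the clean lower bound $r + 3 + (g_1 - g_2)$ appearing in \eqref{eqexist5} dominates the value $t_{\min}$ of \eqref{tmin}, so that the interval of the corollary is contained in the interval of \cite{SK}. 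I would verify this against the three cases of \eqref{tmin}: the inequality $t_{\min} \le r + 3 + (g_1 - g_2)$ is immediate in the first two cases using $g_1 - g_2 \ge 0$, and in the third case it follows from the standing relation $g_1 - g_2 \ge h$ of \cite{SK}. This case analysis is the main (and essentially the only) bookkeeping obstacle in the proof.

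Part (2) is the contrapositive of Theorem \ref{mathm}. The key observation is that the single threshold $g - d + 2r + 2(1+\rho) + (g_1 - g_2) + \delta_{g_1, g_2}$ reproduces both cases of \eqref{mathm2}: for $\rho = -1$ one has $2(1+\rho) = 0$, recovering the first line of \eqref{mathm2} verbatim, while for $\rho = -2$ one has $2(1+\rho) = -2$ and $\frac{2}{-\rho} = 1$, so the second line of \eqref{mathm2} reads precisely $t < g - d + 2r - 2 + (g_1 - g_2) + \delta_{g_1, g_2}$. Hence the assumption that $t$ is at least the stated threshold forces the failure of \eqref{mathm2}, and Theorem \ref{mathm} (whose hypothesis $t \ge 4$ holds throughout the relevant range) then prohibits any limit $g^r_d$.

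For the concluding sharpness statement I would specialize to $(g_1, g_2) = (\lceil \frac{g-2}{2}\rceil, \lfloor \frac{g-2}{2}\rfloor)$ with $\rho = -2$ and $r \ge 2$. A short parity check shows $(g_1 - g_2) + \delta_{g_1, g_2} = 1$ whether $g$ is even or odd. Consequently part (1) produces a smoothable $g^r_d$ for every $t$ with $r + 3 + (g_1 - g_2) \le t \le g - d + 2r - 2$, and since $r + 4 \ge r + 3 + (g_1 - g_2)$, the hypothesis $t \ge r + 4$ places us inside this range precisely when $t \le g - d + 2r - 2$; this yields the "if" direction. For the "only if" direction, part (2) gives non-existence of any limit $g^r_d$ as soon as $t \ge g - d + 2r - 2 + 1 = g - d + 2r - 1$, that is, as soon as $t > g - d + 2r - 2$. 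The existence upper bound $g - d + 2r - 2$ and the non-existence threshold $g - d + 2r - 1$ are consecutive integers, so the two ranges abut exactly and the claimed equivalence follows.
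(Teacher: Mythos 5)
Your overall route coincides with the paper's: part (1) is read off from Theorem 1.1 of \cite{SK} via the substitution $h=\rho+2$, part (2) is the contrapositive of Theorem \ref{mathm}, and the final equivalence comes from intersecting the two ranges in the balanced case. Part (2) and the sharpness statement are handled correctly; in particular your parity check $(g_1-g_2)+\delta_{g_1,g_2}=1$ for $(g_1,g_2)=(\lceil\tfrac{g-2}{2}\rceil,\lfloor\tfrac{g-2}{2}\rfloor)$ and the observation that the existence bound $g-d+2r-2$ and the non-existence threshold $g-d+2r-1$ are consecutive integers are exactly what the paper uses.

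The gap is in part (1). Theorem 1.1 of \cite{SK} is stated under the hypothesis $g_1-h\geq g_2$, which for $\rho=-1$ (i.e.\ $h=1$) is strictly stronger than the corollary's hypothesis $g_1\geq g_2$; the case $g_1=g_2$, $\rho=-1$ is therefore not covered by a direct citation. Indeed, you lean on precisely this unavailable inequality when you justify $t_{min}\leq r+3+(g_1-g_2)$ in the third case of \eqref{tmin} ``from the standing relation $g_1-g_2\geq h$ of \cite{SK}'': that relation is a hypothesis of the theorem you are invoking, not something granted by the corollary, so the argument begs the question there. The paper closes this gap in two steps. First, it notes that the hypothesis $g_1\geq g_2+h$ enters the proof of \cite{SK} only when $r$ is odd, so nothing needs checking for $r$ even. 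Second, for $r$ odd and $h=1$ it rules out $g_1=g_2$ by parity: $\rho(g,r,d)=\rho$ gives $g-\rho=(r+1)(g-d+r)$, which is even since $r$ is odd, whereas $g_1=g_2$ would force $g-\rho=g+1=2g_1+3$ to be odd. Note that the third case of \eqref{tmin}, where your argument needs $g_1-g_2\geq h$, occurs only for $r$ odd, so this parity argument is exactly the missing ingredient; without it, part (1) remains unproved for $\rho=-1$ with $g_1=g_2$.
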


\begin{proof}   (2) is the result of Theorem \ref{mathm} corresponding to $\rho:=\rho (g, r, d)=-1$ or $-2$.   
To get   (1),  set $$h: = 2+\rho.$$   If  the inequality $g_1 \geq g_2 +h$ is satisfied,   then  
(1)  is the result of Theorem 1.1 in \cite{SK}  corresponding to $n=2$ and $h=0$ or $1$, which is stated in the introduction, since $t_{min} (r; g_1, g_2; 2,h)$ in \eqref{tmin}  is no more than   $r+3 +(g_1 -g_2)\ \mbox{ for } r\geq 2.$  
 Thus it remains to show  $g_1 \geq g_2 +h$.
 In the proof of the theorem the hypothesis $g_1 \geq g_2 +h$ is used only when $r$ is odd. Thus it is suffices to check  $g_1 \geq g_2 +h$ when $r$ is odd and $h=1$. In this case, if $g_1 =g_2$ then 
  $g-\rho =g+1=2g_1 +3 $ is odd, whereas the parity of $g-\rho$ is even since $\rho (g,r,d)=\rho$ means $g-\rho =(r+1)(g-d+r)$ and $r $ is odd. This cannot occur.
Hence we have $g_1 \geq g_2+h$. Therefore  we get  the result (1).

 If  $\rho=-2$ and $r\geq 2$, the results (1) and (2) tell that the inequality $t\leq g-d+2r-2$  becomes a necessary and sufficient condition on $t$ for a   TCBE$(\lceil \frac{g-2}{2}\rceil, \lfloor \frac{g-2}{2}\rfloor; 2,t)$ curve with $t\geq r+4$ to carry a smoothable limit $g^r_d$.
 \end{proof}
Recall that $\pmb{\bigtriangleup}  ^{BE} _g (g_1, g_2; 2;t) : = \{ [C] \in \overline {\mathcal M} _g ~|~ C \mbox{ is a  TCBE}(g_1, g_2; 2,t ) \mbox{ curve, }  g_1\geq g_2\}$.
\begin{Corollary}\label{mcoro4}
Let  $g,r,s,d,e$ be positive integers such that $\rho(g,r,d)=\rho(g,s,e)=-2$, $e\neq d$ and $e\neq 2g-2-d$. Then,
 $$\mbox{supp}({\mathcal M^r_{g,d})}\neq \mbox{supp}({\mathcal M^s_{g,e}}).$$ 
  Further, 
$\pmb{\bigtriangleup}^{BE}_{g} \big(  \lceil \tfrac{g-2}{2}\rceil, \lfloor \tfrac{g-2}{2}\rfloor; 2, g-d+2r-2 \, \big) \subset \, \overline{\mathcal M} ^r_{g,d} -\overline{\mathcal M} ^s_{g,e} ,$
where $d,e\leq g-1$ and $s>r$.
\end{Corollary}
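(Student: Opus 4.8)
The plan is to use the special curves in the family $\pmb{\bigtriangleup}^{BE}_g$ as a probe that lies in one Brill--Noether locus but not in the other, thereby separating their closures and hence their supports. Fix the distinguished value $t=g-d+2r-2$ and let $C$ be any curve with $[C]\in\pmb{\bigtriangleup}^{BE}_g(\lceil\tfrac{g-2}{2}\rceil,\lfloor\tfrac{g-2}{2}\rfloor;2,t)$; such $C$ exist, and the family is nonempty of dimension $3g-8$. The whole argument then rests on Corollary \ref{macor}, applied once in its existence form and once in its non-existence form, to the same curve $C$.

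First I would prove the containment $\pmb{\bigtriangleup}^{BE}_g(\lceil\tfrac{g-2}{2}\rceil,\lfloor\tfrac{g-2}{2}\rfloor;2,t)\subset\overline{\mathcal M}^r_{g,d}-\overline{\mathcal M}^s_{g,e}$. For membership in $\overline{\mathcal M}^r_{g,d}$, note that $t=g-d+2r-2$ is exactly the upper endpoint of the existence range in Corollary \ref{macor}(1) (the ``specifically'' clause for $r\ge 2$; for $r=1$ the hypothesis $\rho=-2$ forces $g$ even and $d=g/2$, and the parity and size conditions of the first line are met), so $C$ carries a smoothable limit $g^r_d$ and thus $[C]\in\overline{\mathcal M}^r_{g,d}$. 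For non-membership in $\overline{\mathcal M}^s_{g,e}$, I would use that a compact-type curve lying in $\overline{\mathcal M}^s_{g,e}$ must carry a limit $g^s_e$ (properness of the variety of limit linear series over the locus of compact-type curves); hence it suffices to show $C$ admits no limit $g^s_e$. By Corollary \ref{macor}(2) with $\rho(g,s,e)=-2$, the non-existence threshold $g-e+2s+2(1+\rho)+(g_1-g_2)+\delta_{g_1,g_2}$ simplifies to $g-e+2s-1$, so $C$ carries no limit $g^s_e$ as soon as
$$t=g-d+2r-2\ \ge\ g-e+2s-1.$$

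The crux is this last inequality, equivalently $e-2s>d-2r$. Put $a:=g-d+r$ and $b:=g-e+s$, so that $\rho(g,r,d)=\rho(g,s,e)=-2$ reads $(r+1)a=(s+1)b=g+2$, while $g-d+2r=a+r$ and $g-e+2s=b+s$. Since $d,e\le g-1$ we have $a\ge r+1$ and $b\ge s+1$, whence $a^2\ge(r+1)a=g+2$ and likewise $b^2\ge g+2$, so $a,b\ge\sqrt{g+2}$; moreover $s>r$ together with $(r+1)a=(s+1)b$ gives $b<a$. As $r=\tfrac{g+2}{a}-1$, we may write $a+r=F(a)$ with $F(x):=x+\tfrac{g+2}{x}-1$, and similarly $b+s=F(b)$. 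Because $F'(x)=1-\tfrac{g+2}{x^2}>0$ for $x>\sqrt{g+2}$, the function $F$ is strictly increasing on $[\sqrt{g+2},\infty)$, so $a>b\ge\sqrt{g+2}$ yields $a+r>b+s$; as both sides are integers, $a+r\ge b+s+1$, which is exactly $t\ge g-e+2s-1$. This proves the displayed containment, and since the probe family is nonempty it already gives $\overline{\mathcal M}^r_{g,d}\ne\overline{\mathcal M}^s_{g,e}$.

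Finally I would deduce the statement on supports. By Serre duality a curve carries a $g^r_d$ if and only if it carries the residual $g^{g-d+r-1}_{2g-2-d}$, which has the same Brill--Noether number and defines the same locus; replacing either series by its dual, I may assume $d,e\le g-1$ without altering the supports or the two hypotheses $e\ne d$ and $e\ne 2g-2-d$. Under $d,e\le g-1$ the strict monotonicity of $F$ forces $F(a)=F(b)\Rightarrow a=b\Rightarrow(r,d)=(s,e)$, so the two (now distinct) pairs have distinct Clifford indices $d-2r\ne e-2s$; relabeling so that $d-2r<e-2s$, which is equivalent to $r<s$, the containment just established exhibits a TCBE curve in $\overline{\mathcal M}^r_{g,d}$ but not in $\overline{\mathcal M}^s_{g,e}$. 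Thus the two closures differ; since $\overline{\mathcal M}^r_{g,d}$ and $\overline{\mathcal M}^s_{g,e}$ are the closures in $\overline{\mathcal M}_g$ of $\mbox{supp}(\mathcal M^r_{g,d})$ and $\mbox{supp}(\mathcal M^s_{g,e})$, equality of these supports would force equality of the closures, and we conclude $\mbox{supp}(\mathcal M^r_{g,d})\ne\mbox{supp}(\mathcal M^s_{g,e})$. The main obstacle is the threshold inequality: the decisive point is that the constraints $d,e\le g-1$ place $a,b$ in the range where $F$ is increasing, so the inequality $s>r$ propagates to $e-2s>d-2r$; everything else is bookkeeping with Corollary \ref{macor} and the closure relation between a locus and its support.
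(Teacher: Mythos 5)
Your overall route is the same as the paper's: probe with a TCBE$(\lceil\frac{g-2}{2}\rceil,\lfloor\frac{g-2}{2}\rfloor;2,g-d+2r-2)$ curve, apply Corollary \ref{macor}(1) to put it in $\overline{\mathcal M}^r_{g,d}$ and Corollary \ref{macor}(2) to keep it out of $\overline{\mathcal M}^s_{g,e}$, and derive the decisive inequality $g-d+2r>g-e+2s$ from $(r+1)(g-d+r)=(s+1)(g-e+s)=g+2$ together with $g-d+r>g-e+s\geq s+1$ (your monotone function $F(x)=x+\tfrac{g+2}{x}-1$ is just a repackaging of the paper's ``sum of factors of $g+2$'' comparison). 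The Serre-duality reduction and the passage from closures to supports also match.

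There is, however, one genuine gap: you never verify that $t=g-d+2r-2$ lies \emph{above} the lower endpoint of the existence range. Corollary \ref{macor}(1) (and its ``specifically'' clause) only yields a smoothable $g^r_d$ when $t\geq r+3+(g_1-g_2)$, i.e.\ here $t\geq r+4$ when $g$ is odd; being ``exactly the upper endpoint'' is worthless if the interval is empty. The required inequality $g-d+2r-2\geq r+4$, i.e.\ $g-d+r\geq 6$, is \emph{not} a consequence of $\rho(g,r,d)=-2$ alone: for $(g,r,d)=(10,2,8)$ one has $\rho=-2$ but $g-d+r=4$. It only follows once the second series is brought in: from $s+1\leq g-e+s$ one gets $g+2=(s+1)(g-e+s)\geq(s+1)^2\geq 16$, hence $g-d+r=\tfrac{g+2}{r+1}\geq\tfrac{16}{3}$, so $g-d+r\geq 6$ by integrality when $r=2$, and $g-d+r>g-e+s\geq s+1\geq 5$ gives it directly when $r\geq 3$. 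You already have all the ingredients ($a>b\geq s+1$, $s+1\leq\sqrt{g+2}$) in your write-up, but the step must be made explicit; the paper devotes a paragraph of its proof to exactly this point.
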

\begin{proof}  Assume that $\rho(g,r,d)=\rho(g,s,e)=-2$, $e\neq d$ and $e\neq 2g-2-d$.                  
In the case of $r=1$,  G. Farkas \cite{Farkas_home} proved   that a general $d$-gonal curve $X$ has no $g^s_e$ with $\rho(g,s,e)<0$ except $g^1_d$ and $K_X  -g^1_d$. 
% the case of $r=1$ in  \cite{Farkas_home}, in which he showed that a general $d$-gonal curve $X$ has no $g^s_e$ with $\rho(g,s,e)<0$ except $g^1_d$ and $K_X  -g^1_d$. 
 Thus we assume that  $s>r\geq 2$ and  that $d,e \leq g-1$ by  Serre duality. 
 
 Let $C$ be a  TCBE$(\lceil \frac{g-2}{2}\rceil, \lfloor \frac{g-2}{2}\rfloor; 2, g-d+2r-2)$ curve.  The proof  
will be given by  verifying the existence (resp. non-existence) of a smoothable $g^r_d$ (resp.  $g^s_e$)    on  $C$.  For the existence of $g^r_d$ on $C$, we will prove that 
\begin{equation*}\label{eqgdr}
 r+4 \leq g-d+2r-2,  \  \mbox{ that is }\   g-d+r\geq 6,
\end{equation*} 
which implies the torsion $t=g-d+2r-2$  is bigger than or equal to the lower bound  in Corollary \ref{macor},(1).
By the hypotheses $\rho(g,r,d)=\rho(g,s,e)=-2$, we have $g+2=(r+1)(g-d+r)=(s+1)(g-e+s)$, which  combined  with $d,e \leq g-1$ and $s>r$ yields
\begin{equation}\label{eqrs}
3 \leq r+1 <s+1 \leq g-e+s <g-d+r,  \ \  s+1\leq \sqrt{g+2}.\end{equation}  Thus we get $g+2\geq  16$.
  If $r=2$,  we have $g-d+r =\frac{g+2}{3} \geq \frac{16}{3}$ and hence $g-d+r\geq 6$ holds.  
  If $r\geq 3$,  the inequality $g-d+r\geq 6$ follows from \eqref{eqrs}. 
  Therefore the curve $C$ admits a smoothable $g^r_d$ by Corollary \ref{macor},(1) for $\rho =-2$. 
 
 It remains to show that $C$ has no smoothable $g^s_e$.
From 
 the equation  $g+2=(r+1)(g-d+r)=(s+1)(g-e+s)$ combined with $r+1 <s+1 \leq \sqrt{g+2}\,$  we  obtain 
 $$(r+1) +(g-d+r) > (s+1)+(g-e+s)$$
 which means $t=g-d+2r -2 >g-e+2s-2$. Thus $C$  has no smoothable $g^s_e$ by Corollary \ref{macor},(2). Therefore there is a smooth curve of genus $g$ in ${\mathcal M^r_{g,d}}$ which has no $g^s_e$, whence $\mbox{supp}({\mathcal M^r_{g,d}})\neq \mbox{supp}({\mathcal M^s_{g,e}})$. 
 \end{proof}

\begin{Corollary}\label{macor2}  A Brill-Noether locus ${\mathcal M ^r_{g,d}}$ with  $\rho (g, r, d)=-2$ is not contained in a Brill-Noether divisor   ${\mathcal M ^s_{g,e}}$
 with   $e-2s \geq d-2r+3$. Specifically,  if  $g\geq 34$,  a Brill-Noether locus 
${\mathcal M ^2_{g,d}}$ with  $\rho (g, 2, d)=-2$ is not contained in a Brill-Noether divisor   ${\mathcal M ^s_{g,e}}$ with $s\geq2$. Further, 
\begin{equation*}
\begin{array}{rl}
&\bullet\ \ \pmb{\bigtriangleup}^{BE}_{g} \big(  \lceil \tfrac{g-2}{2}\rceil, \lfloor \tfrac{g-2}{2}\rfloor; 2, g-d+2r-2 \, \big)  \\
&\qquad\qquad\qquad\subset  \, \overline{\mathcal M} ^r_{g,d} -\bigcup\{ \overline{\mathcal M} ^s_{g,e}~|~\rho (g,s,e)=-1,\  e-2s\geq d-2r+3 \}\\
{\mbox{and}}&\\
&\bullet\ \ \pmb{\bigtriangleup}^{BE}_{g} \big(  \lceil \tfrac{g-2}{2}\rceil, \lfloor \tfrac{g-2}{2}\rfloor; 2, g-d+2\, \big)  \\
& \qquad\qquad\qquad \subset \, \overline{\mathcal M} ^2_{g,d} -\bigcup\{ \overline{\mathcal M} ^s_{g,e}~| ~\rho (g,s,e)=-1,\  s\geq 2\}.
\end{array}
\end{equation*}

\end{Corollary}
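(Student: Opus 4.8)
The plan is to deduce all three assertions from a single construction. For the balanced pair $(g_1,g_2)=(\lceil\frac{g-2}{2}\rceil,\lfloor\frac{g-2}{2}\rfloor)$, which has $(g_1-g_2)+\delta_{g_1,g_2}=1$, I would exhibit a TCBE$(g_1,g_2;2,t)$ curve $C$ with a carefully chosen torsion $t$ that simultaneously carries a smoothable $g^r_d$ and carries no limit $g^s_e$. The first is supplied by the existence half, Corollary \ref{macor}(1); the second by the non-existence half, Theorem \ref{mathm} (equivalently Corollary \ref{macor}(2)). Passing to a general smoothing $\pi\colon\mathcal C\to B$ with $C_0=C$ then yields a smooth curve $C_\eta\in\mathcal M^r_{g,d}$; and since a $g^s_e$ on $C_\eta$ would specialize to a limit $g^s_e$ on $C$, the absence of such a limit forces $C_\eta$ to lie outside $\mathcal M^s_{g,e}$ and, more strongly, $[C]\notin\overline{\mathcal M}^s_{g,e}$. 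This separates supports at once.

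For the opening sentence and the first bullet I would take $t=g-d+2r-2$. Because $d-2r$ and $e-2s$ are unchanged under Serre duality, I may normalize $d,e\le g-1$. For the divisor, $\rho(g,s,e)=-1$ makes the non-existence threshold of Theorem \ref{mathm} read $t\ge g-e+2s+1$, and with $t=g-d+2r-2$ this is exactly the hypothesis $e-2s\ge d-2r+3$; hence no limit $g^s_e$ lives on $C$. On the other hand $t$ sits at the top of the existence window of Corollary \ref{macor}(1) for $\rho=-2$, so $C$ carries a smoothable $g^r_d$; the only inequality to verify is the lower bound $r+3+(g_1-g_2)\le t$, i.e. $g-d+r\ge 5+(g_1-g_2)$, which holds in the range under consideration (cf. the estimate $g-d+r\ge 6$ in the proof of Corollary \ref{mcoro4}). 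Running this for every member of the locus proves both the opening sentence and the first bullet.

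For the assertion with $r=2$ and $g\ge 34$ I would reduce to the previous paragraph by checking that every Brill--Noether divisor $\mathcal M^s_{g,e}$ with $s\ge 2$ and $e\le g-1$ already satisfies $e-2s\ge d-1$ (which is $d-2r+3$ when $r=2$). From $\rho(g,2,d)=-2$ one gets $g+2=3(g-d+2)$, so $g\equiv 1\pmod 3$ and $d-1=\frac{2g+1}{3}$. Writing $k=s+1$ and $m=g-e+s$, a divisor satisfies $km=g+1$, the normalization $e\le g-1$ gives $k\le m$, and $e-2s=g+1-(k+m)$; a short manipulation converts the target into
\[
e-2s\ge d-1 \iff (k-3)(m-3)\ge 8 .
\]
Now $g\equiv 1\pmod 3$ forces $3\nmid g+1$, so $k$ is never a multiple of $3$; together with $s\ge 2$ this leaves $k=4$, $k=5$, or $k\ge 7$. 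If $k\ge 7$ then $m\ge k\ge 7$ and $(k-3)(m-3)\ge 16$; if $k=5$ then $m=\frac{g+1}{5}\ge 7$ precisely when $g\ge 34$; and $k=4$ can occur only if $4\mid g+1$, which with $g\equiv 1\pmod 3$ forces $g\equiv 7\pmod{12}$ and hence $g\ge 43$, whereupon $m=\frac{g+1}{4}\ge 11$. In every case the inequality holds for $g\ge 34$, so the previous paragraph applies to all these divisors; this yields the ``specifically'' sentence and, with $t=g-d+2=g-d+2r-2$, the second bullet.

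The main obstacle is exactly the arithmetic above: isolating the equivalence $(k-3)(m-3)\ge 8$ and observing that $3\nmid g+1$ eliminates the only genuinely dangerous small value $k=3$, so that the remaining worst case $k=5$ pins the threshold at $g\ge 34$. Everything else is bookkeeping---substituting $t=g-d+2r-2$ into the two halves of Corollary \ref{macor} and Theorem \ref{mathm}, and invoking properness of the space of limit linear series to turn ``no limit $g^s_e$ on $C$'' into ``$[C]\notin\overline{\mathcal M}^s_{g,e}$''.
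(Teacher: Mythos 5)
Your proposal is correct and follows essentially the same route as the paper: the same balanced TCBE$(\lceil \tfrac{g-2}{2}\rceil, \lfloor \tfrac{g-2}{2}\rfloor; 2, g-d+2r-2)$ curve, with existence of a smoothable $g^r_d$ supplied by Corollary \ref{macor},(1), non-existence of a limit $g^s_e$ supplied by Corollary \ref{macor},(2), and the hypothesis $e-2s\geq d-2r+3$ translating exactly into the threshold $t\geq g-e+2s+1$. Your inequality $(k-3)(m-3)\geq 8$ together with $3\nmid g+1$ is a compact repackaging of the paper's case analysis ($s=2$ excluded by divisibility, $s=3$ forcing $g\equiv 7 \pmod{12}$ and hence $g\geq 43$, and $s\geq 4$ with worst case $s=4$ giving the bound $g\geq 34$); the only point treated differently is $r=1$ in the opening sentence, which the paper dispatches by citing Farkas's result on general $d$-gonal curves rather than by the TCBE construction.
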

\begin{proof} Let $\rho(g,r,d)=-2$ and $\rho (g,s,e)=-1$. By the same reasoning as in the beginning of the proof of  Corollary \ref{mcoro4}, we assume that $r\geq 2$.   Let $C$  be a TCBE$(\lceil \frac{g-2}{2}\rceil, \lfloor \frac{g-2}{2}\rfloor; 2, g-d+2r-2)$ curve.   In the proof of Corollary \ref{mcoro4} we show that   $C$ carries a smoothable $g^r_d$.  On the other hand, by  Corollary \ref{macor},(2),   $C$ has no smoothable $g^s_e$ since the the  hypothesis $e-2s > d-2r+2$ gives $g-d+2r-2 \geq g-e+2s +1$. Therefore, ${\mathcal M ^r_{g,d}}$ is not contained in a Brill-Noether divisor   ${\mathcal M ^s_{g,e}}$.

 Now, consider the case of  ${\mathcal M ^2_{g,d}}$ with $\rho (g,2,d)=-2$ and $g\geq 34$. Let $C$  be a TCBE$(\lceil \frac{g-2}{2}\rceil, $ $\lfloor \frac{g-2}{2}\rfloor; 2, g-d+2)$ curve.  The proof of Corollary \ref{mcoro4} tells that there exists a smoothable $g^2_d$ on $C$ since $g-d+2r-2=g-d+2$ for $r=2$. 
Now, we will show that $C$ does not admit a smoothable $g^s_e$ with $\rho(g,s,e)=-1$.  According to Corollary \ref{macor},(2), we will get the non-existence of $g^s_e$ by proving
\begin{equation}\label{comparet}
g-d+2 \geq g-e+2s+1.
\end{equation}
Foremost  we have $s\neq 2$,  since  two conditions $\rho(g,d,2)=-2$ and $\rho(g,s,e)=-1$ respectively mean
\begin{equation}\label{eqg12}
g+2=3(g-d+2), \ \ \  g+1=(s+1)(g-e+s).
\end{equation}
Assume that $s\geq 4$. Considering $g+1=(s+1)(g-e+s)$ combined with the graph given by $xy=g+1$, we see that $(s+1)+(g-e+s)$ attains the maximum when $s=4$ since $e\leq g-1$. 
From  \eqref{eqg12} it follows 
\begin{equation*}
g-d+2=\frac{g+2}{3}\ \mbox{ and }  \  \  g-e+2s+1=(s+1)+(g-e+s) \leq 5+ \frac{g+1}{5},
\end{equation*}
whence the equation \eqref{comparet} holds for $g\geq 34$. 

 Thus it remains to show \eqref{comparet} in case $s=3$.  The equations in \eqref{eqg12} imply that 
$$g+2\equiv 0\ (\mbox{mod }3), \ \  g+1\equiv 0 \ (\mbox{mod }4),$$
whence $g-7$ is a multiple of  12.  This combined with $g\geq 34$ yields $g\geq 43$.
According to \eqref{eqg12}  we get $g-e+2s +1 = 4 + \frac{g+1}{4}$ which implies the validity of   \eqref{comparet} for $g\geq 43$ since  $g-d+2=\frac{g+2}{3}$.    In sum, we conclude that $C$ does not admit any $g^s_e$ with $\rho (g,s,e)=-1$ and $s\geq 2$. Therefore, we get the results of the corollary.
\end{proof}

\begin{Remark}\label{rmknet}   ${\mathcal M ^2_{g,d}}$ is irreducible  since the Severi variety of degree $d$ and (geometric) genus $g$ plane curves  is irreducible \cite{Har}. In case $\rho (g,2,d) \leq -2$ and $g\geq 34$,  Corollary \ref{mcoro4}   and \ref{macor2} imply that a general curve $X$ having $g^2_d$   has no $g^s_e$ with $s\geq 2$ and $\rho (g,s,e)<0$ except  $ g^2_d$ and   $ K_X -g^2_d$. This means that the smooth model $X$ of   a general plane curve  of degree $d$ and genus $g$ with  $34\leq g\leq \frac{3d-4}{2}$ has  no $g^s_e$   having  $s\geq 2$ and $\rho (g,s,e)<0$ except  $ g^2_d$ and   $ K_{X }-g^2_d$.
\end{Remark}

The following  example shows  relations among Brill-Noether  loci of $\mathcal M_{34}$ .
\begin{Example} \label{exam34} The moduli space $\mathcal M_{34}$ admits one Brill-Noether  divisor $\mathcal M ^4_{34, 31}$ and four Brill-Noether loci with Brill-Noether number $\rho = -2$ as follows:
$$\mathcal M ^1_{34, 17}, \  \mathcal M ^2_{34, 24}, \ \mathcal M ^3_{34, 28} , \ \mathcal M ^5_{34, 33}.$$
Using Corollary \ref{macor} we get some relations among the above Brill-Noether loci.  In Table \ref{table34} and \ref{table34-1}, we precisely demonstrate the  range of $t$ for  the existence and the lower bound  of $t$ for the  nonexistence of a limit linear series $g^r_d$ on a TCBE$(g_1,g_2;2,t)$ curve as follows: in case $r\geq 2$,
\small
\begin{center}
\begin{table}[ht]
\begin{tabular}{c|c|c}
\hline $\ \overline{\mathcal M} ^r_{g,d} $  & $ r+3+(g_1 -g_2) \leq t\leq g-d+2r +\rho $  & $  g-d+2r +2(1+\rho) +(g_1 -g_2) +\delta _{g_1, g_2}$ \\
\hline   $\ \overline{\mathcal M}  ^2_{34, 24}$ & $5 +(g_1 -g_2) \leq t\leq 12 $ & $12+(g_1 -g_2) +\delta _{g_1, g_2}$\\
\hline   $\ \overline{\mathcal M} ^3_{34, 28}$ & $6+(g_1 -g_2)\leq t\leq 10$  & $10+(g_1 -g_2) +\delta _{g_1, g_2}$\\
\hline   $\ \overline{\mathcal M} ^4_{34, 31}$  &$7+(g_1 -g_2) \leq t\leq 10$ & $11+(g_1 -g_2) +\delta _{g_1, g_2}$\\
\hline   $\ \overline{ \mathcal M} ^5_{34, 33}$ & $8+(g_1 -g_2)\leq t\leq 9$ & $9+(g_1 -g_2) +\delta _{g_1, g_2}$ \\
\hline
\end{tabular}
\vspace{0.1cm}
\caption{\label{table34}}
\end{table}
\end{center}
\normalsize
\vspace{-0.5cm}
and  in case $r=1$, 
\small
\begin{center}
\begin{table}[ht]
\begin{tabular}{c|c|c}
\hline $\ \overline{\mathcal M} ^1_{g,d} $  & $ t\leq g-d+3+\frac{3\rho}{2}-\frac{g_1 -g_2}{2}, \ \ t\equiv g_1 +1 \ (\mbox{mod }2) $  & $  g-d +(g_1 -g_2) +\delta _{g_1, g_2}$ \\
\hline   $\ \overline{\mathcal M} ^1_{34,17} $ & $t\leq 17-\frac{g_1 -g_2}{2} ,  \ t\equiv g_1 +1 \ (\mbox{mod }2)  $ & $17+(g_1 -g_2) +\delta _{g_1, g_2}$\\
\hline
\end{tabular}
\vspace{0.1cm}
\caption{\label{table34-1}}
\end{table}
\end{center}
\normalsize
\vspace{-0.5cm}

Corollary \ref{macor} combined with Table \ref{table34} and \ref{table34-1} yields  the following:
\begin{enumerate}
\item[$\bullet$] $\pmb{\bigtriangleup} ^{BE}_{34} (16,16;2,12)   \subset \overline{\mathcal M} ^2_{34, 24}-  ( \overline{\mathcal M} ^3_{34, 28}\cup \overline{\mathcal M} ^4_{34,31} \cup\overline{\mathcal M} ^5_{34, 33} ) $\\
\item[$\bullet$] $\bigcup\limits_{t=11,12}\pmb{\bigtriangleup} ^{BE}_{34} (16,16;2,t) \bigcup \pmb{\bigtriangleup} ^{BE}_{34} (17,15;2,12)  \subset  \overline{\mathcal M} ^2_{34, 24} -(  \overline{\mathcal M} ^3_{34, 28} \cup \overline{\mathcal M}^5_{34, 33}) $\\
\item[$\bullet$] $\bigcup\limits_{t=11,12}\{\pmb{\bigtriangleup} ^{BE}_{34} (16,16;2,t) \bigcup \pmb{\bigtriangleup} ^{BE}_{34} (17,15;2,t) \} \subset  \overline{\mathcal M} ^2_{34, 24} - \overline{\mathcal M}^5_{34, 33} $\\
\item[$\bullet$] $\pmb{\bigtriangleup} ^{BE}_{34} (16,16;2,10)   \subset  (\overline{\mathcal M} ^2_{34, 24} \cap  \overline{\mathcal M} ^3_{34, 28}\cap \overline{\mathcal M} ^4_{34,31}) - \overline{\mathcal M}^5_{34, 33} $\\
\item[$\bullet$] $\pmb{\bigtriangleup} ^{BE}_{34} (16,16;2,11)   \subset  ( \overline{\mathcal M}^1_{34, 17}\cap\overline{\mathcal M} ^2_{34, 24})-  (\overline{\mathcal M} ^3_{34, 28}  
\cup \overline{\mathcal M} ^5_{34, 33})  $\\
\item[$\bullet$] $\bigcup\limits_{t=13,15,17}\pmb{\bigtriangleup} ^{BE}_{34} (16,16;2,t)  \subset  \overline{\mathcal M} ^1_{34, 17} -(\overline{\mathcal M}^2_{34, 24} \cup \overline{\mathcal M} ^3_{34, 28}\cup  \overline{\mathcal M}^5_{34, 33}  \cup \overline{\mathcal M} ^4_{34,31})$\\
\item[$\bullet$] $\pmb{\bigtriangleup} ^{BE}_{34} (16,16;2,9)  \subset  \overline{\mathcal M} ^1_{34, 17} \cap \overline{\mathcal M} ^2_{34, 24} \cap \overline{\mathcal M} ^3_{34, 28} \cap \overline{\mathcal M}^5_{34, 33}  \cap \overline{\mathcal M} ^4_{34, 31}$
\end{enumerate}
 Concerning the last relationship in the above, we make a remark that
 \begin{eqnarray*}
&&\mbox{dim}  \big( \pmb{\partial}\overline{\mathcal M} ^1_{34, 17} \cap  \pmb{\partial}\overline{\mathcal M} ^2_{34, 24} \cap  \pmb{\partial}\overline{\mathcal M} ^3_{34, 28} \cap  \pmb{\partial}\overline{\mathcal M}^5_{34, 33}  \cap  \pmb{\partial}\overline{\mathcal M} ^4_{34, 31}\big)\\
&&\geq \mbox{dim} \big(\pmb{\bigtriangleup} ^{BE}_{34} (16,16;2,9)\big)\\
&&= \mbox{dim}\big(\pmb{\partial}\overline{\mathcal M} ^1_{34, 17} \big)-2\, ,
\end{eqnarray*}
since
$\mbox{dim} \pmb{\bigtriangleup} ^{BE}_{34} (16,16;2,9) = 2\{ \mbox{dim} {\mathcal M}_{16,1} + (\mbox{dim}  {\mathcal M}_{1,2} -1) \}=94.$ 
Here, $\pmb{\partial}\overline{\mathcal M} ^r_{34, d}$ denotes the boundary of  ${\mathcal M} ^r_{34, d}$ in 
 $\overline{\mathcal M} _{34}.$ 
\end{Example}
%%%%%%%%%%%%%%%%%%%%%%%%%%%%%%%%%%%%%%%%%%%%%%%%%%%%%%%%%%%%
\pagestyle{myheadings} \markboth{}{}

%%%%%%%%%%%%%%%%%%%%%%%%%%%%%%%%%%%%%%%%%%%%%
\end{document}